\newtheorem{theorem}{Theorem}
\newtheorem{proposition}[theorem]{Proposition}
\newtheorem{remarque}[theorem]{Remark}
\def\R{\mathbb{R}}
\DeclareMathOperator{\sign}{sign}
\title{An explicit pseudo-energy conserving time-integration scheme for Hamiltonian dynamics} 
\author{\begin{minipage}{\textwidth}\centering Frédéric
Marazzato$^{1,2,3}$, Alexandre Ern$^{1,3}$, Christian Mariotti$^{2}$ and Laurent Monasse$^{1,4}$\\
   \small{$^{1}$Universit\'e Paris-Est, Cermics (ENPC), F-77455 Marne-la-Vall\'ee cedex 2, France}\\
   \small{email: \texttt{\{alexandre.ern, frederic.marazzato\}@enpc.fr}}\\
   \small{$^{2}$CEA, DAM, DIF, F-91297 Arpajon, France}\\
   \small{email: \texttt{christian.mariotti@cea.fr}} \\
   \small{$^{3}$Inria Paris, EPC SERENA, F-75589 Paris, France}\\
   \small{$^4$ Inria, Team COFFEE, Sophia Antipolis and Universit\'e
   Nice Sophia Antipolis, CNRS and Laboratoire J. A. Dieudonn\'e, UMR
   7351, 06108 Nice, France}\\
   \small{email: \texttt{laurent.monasse@inria.fr}}\end{minipage}}
\begin{document}
\hypersetup{urlcolor=blue,linkcolor=black,citecolor=blue}

\maketitle

\begin{abstract}
We propose a new explicit pseudo-energy and momentum conserving scheme for the time integration of Hamiltonian systems. The scheme, which is formally second-order accurate, is based on two key ideas: the 
integration during the time-steps 
of forces between free-flight particles and the use of 
momentum jumps at the discrete time nodes leading to a two-step formulation
for the acceleration. The pseudo-energy conservation is established under exact force integration, whereas it is valid to second-order accuracy in the presence of quadrature errors.
Moreover, we devise an asynchronous version of the scheme that
can be used in the framework of slow-fast time-stepping strategies. 
The scheme is
validated against classical benchmarks and on nonlinear or 
inhomogeneous wave propagation problems.
\end{abstract}

\section{Introduction}

Energy and momentum conservation is an important property of numerical schemes
for a large number of physical problems. For instance, in statistical
physics, accurately conserving first integrals 
constitutes a fundamental requirement to
capture the correct behaviour of the system. In mechanics, conservation
of the mechanical energy (together with momentum) 
is an important feature for systems such as
the acoustics in a piano~\cite{chabassier2010energy} or nonlinear contact
dynamics~\cite{hauret2006energy, fetecau2003nonsmooth}. In this work, 
we consider Hamiltonian systems consisting of $N$ particles 
in dimension $d$ (typically, $d=1$,
$2$ or $3$) where $q_i,p_i\in\R^d$ are the position and momentum of
the particle $i\in\{1,\dots,N\}$. 
We assume that the Hamiltonian has the following split form:
\begin{equation}
\label{eq:model}
H(\bm{q},\bm{p})
= \frac{1}{2}\bm{p}^{\mathrm{T}} \bm{M}^{-1}\bm{p} + V(\bm{q}),
\end{equation}
where $\bm{q}= (q_1,\dots,q_N)\in\R^{dN}$ is the 
position vector of the particles,
$\bm{p}=
(p_1,\dots,p_N)\in\R^{dN}$ is the momentum vector of the particles, $\bm{M}$ is
the symmetric positive definite mass matrix and $V$ is the potential
energy. The system is thus driven by the equations
\begin{equation}
\dot{\bm{q}} = \bm{M}^{-1}\bm{p},\qquad \dot{\bm{p}} = 
-\nabla V(\bm{q}). \label{eq:hamilton}
\end{equation}

Several approaches have been proposed to tackle the issue of
conservation when integrating numerically (\ref{eq:hamilton}). A first
possibility consists in the use of symplectic
schemes~\cite{hairer2006geometric}, which integrate a modified (not
explicitly exhibited, except in certain simple cases) Hamiltonian and
thus preserve the first integrals of the dynamics over exponentially
long times (with respect to the time-step), up to fluctuations whose
amplitudes grow with the time-step.  However, in the case of variable
time-steps, symplectic schemes lose their conservation properties
since the modified Hamiltonian changes with the
time-step~\cite{calvo1993development}. When the time-step
size is driven by the shape of the Hamiltonian (e.g. in Kepler's
problem with high eccentricity), a workaround consists in adding a
perturbation accounting for the time-step variation in order for a
rescaled dynamic to remain Hamiltonian~\cite{hairer1997variable}. In
practice, for mechanical problems, such a condition on the time-step
can become impractical, since the time-step could be imposed due to
coupling or stiffness phenomena not accounted for in the Hamiltonian
part. For an extended review of variational integrators in mechanics,
we refer the reader to~\cite{marsden2001discrete}.  Another approach consists in
imposing the exact conservation of energy and momentum at each step of
the numerical scheme. Integrating on the constant energy manifold can
be carried out using projection~\cite{hughes1978transient} or Lie
group integration~\cite{iserles2000lie}, but these methods are
computationally expensive as soon as the manifold of constant energy
and momentum has a complex shape. Another class of methods,
energy-momentum conserving schemes, have been proposed
in~\cite{simo1994new, gonzalez1996stability, hauret2006energy,
chabassier2010energy} for nonlinear mechanics, contact mechanics and
nonlinear wave equations, among others. The general principle is to
integrate the nonlinear forces at a special time during the time-step,
which is determined through a nonlinear implicit procedure.  A
higher-order version of these implicit schemes has been derived for
linear wave propagation
in~\cite{chabassier2013introduction}. Variational integrators
combining features of symplectic and energy-momentum schemes have been
developed for variable time-step strategies~\cite{kane1999symplectic}
and nonlinear mechanical problems in~\cite{gross2005conservation}.  

To the best of our knowledge, no explicit pseudo-energy conserving 
scheme has been proposed to date for nonlinear problems. With the 
motivation that explicit schemes often result in greater computational
efficiency, the goal of the present work is to develop such an explicit
scheme for nonlinear mechanics, where pseudo-energy conservation holds exactly for exact force integration and up to second-order accuracy in the presence of quadratures. The present scheme hinges on two key ideas.
The first one, already considered in~\cite{mariotti2016new}, is to 
approximate the dynamics of the particles by free-flight
trajectories during each time-step. The second one is to use
momentum jumps at the discrete time nodes to approximate the 
acceleration. In doing so, we circumvent the
negative result on the existence of explicit schemes
in~\cite[Lemma 3.3]{chabassier2010energy} through the use of a
two-step strategy. This idea has some links with the
implicit energy-conserving average vector field
method \cite{quispel2008new} where the
conservation of the Hamiltonian is formulated
using an implicit integral of the forces
derived from the potential $V$ over the time-step. A high-order
generalization of the average vector field method using collocation
has been developed in~\cite{hairer2010energy}.  The present
numerical scheme shares with average vector field methods the salient
feature of average force integration over each time-step. However, the
two schemes differ on the discretisation of the acceleration, which is
based here on momentum jumps.  

A further development of the present work is to devise an
asynchronous version of our scheme that
lends itself to slow-fast decompositions as presented
in~\cite{hairer2006geometric}, with the goal to further reduce the
computational cost of the simulation. In the case of mechanical systems
with local stiffness, the conditional stability of an explicit
time-integration scheme typically involves small time-steps for the whole
system. A promising direction to mitigate this drawback consists in
using a local time-stepping strategy.  In the linear case, explicit
high-order energy-momentum conserving methods with local time-stepping
have been proposed in~\cite{diaz2009energy}. In the nonlinear
case, a modified St\"ormer--Verlet method for Hamiltonian 
systems containing slow and fast
components is developed in~\cite{hairer2006geometric}. 
It is proved that this time-integrator remains
symplectic, but the ratio of the fast and slow time-steps
strongly influences the error on the total energy and, in general, a
good balance has to be found experimentally. This phenomenon is called
resonance since it is encountered for certain slow/fast
ratios. Similarly, asynchronous variational integrators generally
exhibit resonances when the local time-steps are close to certain
rational ratios, so that ensuring stability requires adequate fitting
of the local time-steps~\cite{fong2008stability}.  In contrast,
the asynchronous version of the present scheme allows 
one to make slow-fast time-integration while conserving a
pseudo-energy (in the absence of quadrature errors). Our numerical
tests show that the asynchronous scheme still exhibits second-order accuracy;
a mathematical proof of this property is postponed to future work.

This paper is organized as follows. In Section \ref{sec:synchronous},
we present the scheme for a Hamiltonian system of interacting particles with a
synchronous time-integration and establish the main properties of the
scheme including second-order accuracy, time-reversibility, linear stability under a CFL condition, and pseudo-energy conservation under exact force integration. In Section \ref{sec:results}, we test 
the synchronous scheme on various benchmarks from the literature including
a nonlinear wave propagation problem. In
Section \ref{sec:asynchronous}, we present the slow-fast time-stepping capabilities of the asynchronous version of the scheme, together with numerical results
on model particle systems connected by springs and on an inhomogeneous wave equation. These results demonstrate the efficiency gains of the asynchronous scheme with
respect to the synchronous scheme. 

\section{Synchronous scheme}\label{sec:synchronous}

In this section, we present our scheme in its synchronous version and establish its main properties.

\subsection{Definition of the scheme}
We consider a sequence of discrete time nodes $t^n$, $n=0,1,\ldots$, 
with time-steps $h_n=t^{n+1}-t^n$ and time intervals $I_n=[t^n,t^{n+1}]$.
The scheme is written at step $n$ as follows: knowing
$\bm{p}^{n-1/2}$, $\bm{q}^n$, and $[\bm{p}]^n$, one computes
\begin{subequations}
\label{schema CM}
\begin{align}
\bm{p}^{n+1/2} &= \bm{p}^{n-1/2} + [\bm{p}]^{n}, \\
\bm{q}^{n+1} &= \bm{q}^n + h_n \bm{M}^{-1}\bm{p}^{n+1/2},\\
\frac{1}{2} \left([\bm{p}]^{n+1} + [\bm{p}]^{n} \right) & =
-\int_{I_n} \nabla V(\bm{\hat{q}}^n(t)) dt, \label{eq:jumps}
\end{align}
\end{subequations}
with the free-flight trajectory over the time interval $I_n$ defined by
\begin{equation} 
\bm{\hat{q}}^n(t) = \bm{q}^n + (t
- t^n)\bm{M}^{-1}\bm{p}^{n+1/2} \quad\forall t\in I_n. 
\label{eq:free_flight}
\end{equation}
Here, $[\bm{p}]^n$ represents the jump of the momentum vector at time $t^n$,
$\bm{q}^n$ the position vector at time $t^n$, and
$\bm{p}^{n+1/2}$ is the momentum vector between $t^n$ and $t^{n+1}$. 
We observe that $\bm{q}^{n+1} = \bm{\hat{q}}^n(t^{n+1})$. We
initialize the scheme as follows:
\begin{equation}
\label{eq:init_scheme}
\bm{p}^{-1/2} = \bm{p}(t^0), \quad \bm{q}^0
= \bm{q}(t^0), \quad [\bm{p}]^{0}=\bm{0},
\end{equation}
where $\bm{q}(t^0),\bm{p}(t^0)$ are the given position and momentum vectors at the initial time $t^0$.
The scheme (\ref{schema CM}) can alternatively be written as the
following 2-step scheme without jumps: knowing $\bm{p}^{n-1/2}$, $\bm{q}^n$,
and $\bm{p}^{n+1/2}$, one computes
\begin{subequations}
\label{schema multistep}
\begin{align}
\bm{q}^{n+1} &= \bm{q}^n + h_n \bm{M}^{-1}\bm{p}^{n+1/2},\\
\frac{1}{2} \left(\bm{p}^{n+3/2} - \bm{p}^{n-1/2} \right) & =
-\int_{I_n} \nabla V(\bm{\hat{q}}^n(t)) dt, \label{eq:jumps2}
\end{align}
\end{subequations}
with the free-flight trajectory defined by~\eqref{eq:free_flight}.
The initialization of the scheme, equivalent to~\eqref{eq:init_scheme}, is as follows:
\begin{equation}
\label{eq:init_scheme_bis}
\bm{p}^{-1/2} = \bm{p}(t^0), \quad \bm{q}^0
= \bm{q}(t^0), \quad \bm{p}^{1/2} = \bm{p}(t^0).
\end{equation}
This initialization is tailored to achieve exact pseudo-energy conservation under exact force integration, as shown in Theorem \ref{energy conservation} below. Other choices for the initialization are possible, for instance using a one-step method.

In the numerical implementation of the scheme, the integral
in (\ref{eq:jumps}) (or in \eqref{eq:jumps2}) 
is usually not computed exactly but with a
quadrature of the form
\begin{equation}
Q_n(f(t);t^n;t^{n+1}) = h_n\sum_{i=0}^{I} \omega_i f(\lambda_i t^n + (1-\lambda_i)t^{n+1})
\approx \int_{I_n} f(t) dt,
\end{equation}
where the real numbers $\omega_i$ are the weights and the real numbers $\lambda_i\in [0,1]$ define the quadrature points.
Applying the quadrature componentwise for the calculation of the forces and exploiting that the position of the particles varies linearly in time during the free flight, we obtain
\begin{equation}
Q_n\left(\nabla V(\bm{\hat{q}}^n(t));t^n;t^{n+1}\right) = h_n\sum_{i=0}^{I} \omega_i \nabla V(\lambda_i \bm{q}^n + (1-\lambda_i) \bm{q}^{n+1}))
\approx \int_{I_n} \nabla V(\bm{\hat{q}}^n(t)) dt,
\label{eq:quadrature}
\end{equation}
and we replace (\ref{eq:jumps}) with
\begin{equation}
\frac{1}{2} \left([\bm{p}]^{n+1} + [\bm{p}]^{n} \right)  = -Q_n\left(\nabla V(\bm{\hat{q}}^n(t));t^n;t^{n+1}\right), \label{eq:jumps_inexact}
\end{equation}
and a similar modification for \eqref{eq:jumps2}.
In what follows, we assume that the quadrature is symmetric:
\begin{equation}
\forall i\in\{0,\dots,I\},\quad \omega_i = \omega_{I-i} \quad \mathrm{ and
} \quad \lambda_i = 1-\lambda_{I-i}, \label{eq:symmetry}
\end{equation}
and at least of order two (i.e., that the quadrature integrates 
exactly affine polynomials). We also assume that $V$ is of class $C^2$,
i.e., $V \in C^2(\mathbb{R}^{dN};\R)$. 
This implies that
\begin{equation}
Q_n\left(\nabla V(\bm{\hat{q}}^n(t));t^n;t^{n+1}\right) = \int_{I_n}{\nabla
V(\bm{\hat{q}}^n(t)) dt} + \mathcal{O}(h_n^3).\label{eq:order2}
\end{equation}

\subsection{Properties of the scheme}

We now establish various properties of the scheme: pseudo-energy conservation (in the absence of quadrature errors), symmetry (or time-reversibility), second-order accuracy, and linear stability (with constant time-step).

\begin{theorem} [Pseudo-energy conservation]
\label{energy conservation}
Assume that the quadrature is exact.
Then, the scheme (\ref{schema CM}) exactly conserves the
following pseudo-energy:
\begin{equation}
\label{discrete energy}
\tilde{H}^n := V(\bm{q}^n) + \frac{1}{2}\left(\bm{p}^{n-1/2}\right)^{\mathrm{T}} \bm{M}^{-1} \bm{p}^{n+1/2}.
\end{equation}
Moreover, denoting $H^0:=H(\bm{q}(t^0),\bm{p}(t^0))$ the value
of the exact Hamiltonian at the initial time time $t^0$, we have $\tilde{H}^n = H^0$ for all $n\geq 0$ if the scheme is initialized using \eqref{eq:init_scheme}. 
\end{theorem}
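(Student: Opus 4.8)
The plan is to show that the discrete pseudo-energy $\tilde{H}^n$ is constant from one step to the next, i.e., $\tilde{H}^{n+1} = \tilde{H}^n$ for all $n \geq 0$, and then separately to identify the constant value with $H^0$ under the initialization \eqref{eq:init_scheme}. For the first part, I would compute the difference
\[
\tilde{H}^{n+1} - \tilde{H}^n = V(\bm{q}^{n+1}) - V(\bm{q}^n) + \tfrac{1}{2}\left(\bm{p}^{n+1/2}\right)^{\mathrm{T}}\bm{M}^{-1}\bm{p}^{n+3/2} - \tfrac{1}{2}\left(\bm{p}^{n-1/2}\right)^{\mathrm{T}}\bm{M}^{-1}\bm{p}^{n+1/2}.
\]
The two kinetic terms share the factor $\bm{M}^{-1}\bm{p}^{n+1/2}$, so they combine into $\tfrac{1}{2}\left(\bm{p}^{n+3/2}-\bm{p}^{n-1/2}\right)^{\mathrm{T}}\bm{M}^{-1}\bm{p}^{n+1/2}$. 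Now I invoke the momentum-jump relation \eqref{eq:jumps2} (valid since the quadrature is assumed exact) to replace $\tfrac{1}{2}\left(\bm{p}^{n+3/2}-\bm{p}^{n-1/2}\right)$ by $-\int_{I_n}\nabla V(\bm{\hat q}^n(t))\,dt$, giving the kinetic contribution
\[
-\left(\int_{I_n}\nabla V(\bm{\hat q}^n(t))\,dt\right)^{\mathrm{T}}\bm{M}^{-1}\bm{p}^{n+1/2}.
\]

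The key step is then to recognize this as exactly $-\left(V(\bm{q}^{n+1}) - V(\bm{q}^n)\right)$, using the fundamental theorem of calculus along the free-flight path. Indeed, by \eqref{eq:free_flight} we have $\bm{\hat q}^n(t^n)=\bm{q}^n$, $\bm{\hat q}^n(t^{n+1})=\bm{q}^{n+1}$, and $\tfrac{d}{dt}\bm{\hat q}^n(t)=\bm{M}^{-1}\bm{p}^{n+1/2}$, which is constant on $I_n$. Hence
\[
V(\bm{q}^{n+1}) - V(\bm{q}^n) = \int_{I_n}\frac{d}{dt}V(\bm{\hat q}^n(t))\,dt = \int_{I_n}\nabla V(\bm{\hat q}^n(t))^{\mathrm{T}}\,\bm{M}^{-1}\bm{p}^{n+1/2}\,dt = \left(\int_{I_n}\nabla V(\bm{\hat q}^n(t))\,dt\right)^{\mathrm{T}}\bm{M}^{-1}\bm{p}^{n+1/2},
\]
where the last equality pulls the constant vector $\bm{M}^{-1}\bm{p}^{n+1/2}$ out of the integral. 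Substituting this back shows that the kinetic contribution cancels $V(\bm{q}^{n+1})-V(\bm{q}^n)$ exactly, so $\tilde{H}^{n+1}-\tilde{H}^n=0$. This uses only that $V\in C^1$ (a fortiori $C^2$) and the exactness of the quadrature; the chain rule applied to the affine path $\bm{\hat q}^n$ is what makes the potential difference telescope against the integrated force.

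Finally, to pin down the constant, I evaluate $\tilde{H}^0$ under \eqref{eq:init_scheme}. The jump relation gives $\bm{p}^{1/2}=\bm{p}^{-1/2}+[\bm{p}]^0=\bm{p}^{-1/2}=\bm{p}(t^0)$, so
\[
\tilde{H}^0 = V(\bm{q}^0) + \tfrac{1}{2}\left(\bm{p}^{-1/2}\right)^{\mathrm{T}}\bm{M}^{-1}\bm{p}^{1/2} = V(\bm{q}(t^0)) + \tfrac{1}{2}\bm{p}(t^0)^{\mathrm{T}}\bm{M}^{-1}\bm{p}(t^0) = H(\bm{q}(t^0),\bm{p}(t^0)) = H^0,
\]
using the split form \eqref{eq:model} of the Hamiltonian. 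Combined with the telescoping identity $\tilde{H}^n=\tilde{H}^0$ for all $n\geq 0$, this yields $\tilde{H}^n=H^0$. I do not anticipate a genuine obstacle here: the only subtlety is making sure the sign and transpose bookkeeping in the kinetic-term manipulation is done consistently, and that the exact-quadrature hypothesis is explicitly used precisely where \eqref{eq:jumps2} is invoked; the crux — that $V$ along a free flight has a derivative that integrates against the constant velocity — is elementary.
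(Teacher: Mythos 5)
Your proposal is correct and follows essentially the same route as the paper: the chain rule along the affine free-flight trajectory turns $V(\bm{q}^{n+1})-V(\bm{q}^n)$ into the integrated force dotted with the constant velocity $\bm{M}^{-1}\bm{p}^{n+1/2}$, the jump relation \eqref{eq:jumps2} substitutes that integral, and the kinetic terms telescope; the initialization check is likewise identical. The only difference is cosmetic ordering — you start from the difference $\tilde{H}^{n+1}-\tilde{H}^n$ whereas the paper starts from the chain-rule identity — so there is nothing to add.
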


\begin{proof}
Using (\ref{schema CM}.a), \eqref{eq:free_flight}, and the chain
rule, we obtain
\[\frac{d}{dt} \left( V(\bm{\hat{q}}^n(t)) \right) = \nabla
V\left(\bm{\hat{q}}^n(t) \right) \cdot (\bm{\hat{q}}^n)'(t)
= \left(\bm{M}^{-1} \bm{p}^{n+1/2}\right)^{\mathrm{T}} \nabla V\left(\bm{\hat{q}}^n(t) \right) . \]
Integrating in time and using (\ref{eq:jumps}) and the symmetry of $\bm{M}$, we infer that
\begin{align*}
 V(\bm{q}^{n+1}) - V(\bm{q}^n) &= \left(\bm{p}^{n+1/2}\right)^{\mathrm{T}} \bm{M}^{-1} \int_{t^n}^{t^{n+1}} \nabla V(\bm{\hat{q}}^n(t)) dt  \\
 &= -\left(\bm{p}^{n+1/2}\right)^{\mathrm{T}} \bm{M}^{-1} \frac{1}{2} \left( [\bm{p}]^{n+1} + [\bm{p}]^{n} \right) \\ &= -\left(\bm{p}^{n+1/2}\right)^{\mathrm{T}} \bm{M}^{-1} \frac{1}{2} \left( \bm{p}^{n+3/2} - \bm{p}^{n-1/2} \right).
\end{align*}
This leads to 
\[ V(\bm{q}^{n+1})
+ \frac{1}{2}\left(\bm{p}^{n+1/2}\right)^{\mathrm{T}} \bm{M}^{-1} \bm{p}^{n+3/2} =
V(\bm{q}^n)
+ \frac{1}{2}\left(\bm{p}^{n-1/2}\right)^{\mathrm{T}} \bm{M}^{-1} \bm{p}^{n+1/2},\] 
showing that $\tilde{H}^{n+1}=\tilde{H}^n$, thereby proving the first assertion. 
Finally, using the initialization \eqref{eq:init_scheme}, we obtain
$\tilde{H}^0=H^0$, and this concludes the proof.
\end{proof}

\begin{remarque} [Quadratures]
\label{rq:quadratures}
In practice, the integral in Equation (\ref{eq:jumps}) can be computed exactly only for polynomial potentials $V$. 
For instance, using the $n$-point Gauss--Lobatto (resp., Gauss--Legendre) quadrature, polynomials of degree up to $2n-3$ (resp., $2n-1$) are integrated exactly. 
The use of quadratures instead of exact integration for general nonlinear potentials entails only an approximate conservation of the pseudo-energy. Since the scheme is second-order accurate (see Theorem \ref{order of the method} below), we expect that pseudo-energy conservation is second-order accurate at best: 
\begin{equation}
\label{eq:pseudo-energy conservation with quadratures}
\tilde{H}^n = \tilde{H}^0 + O(h^2).
\end{equation}
where $h := \sup_n h_n$.
The order of the quadrature has an influence on the multiplicative constant in $O(h^2)$, with the constant (swiftly) decreasing when increasing the quadrature order. Numerical results are presented on a nonlinear wave propagation problem in Section~\ref{sec:nonlinear wave equation}.
\end{remarque}


\begin{remarque}[Momentum conservation]
Let $(1,...,1)$ be the vector of size $dN$ filled with ones. Assume that the system is isolated, i.e., $(1,..,1)^{\mathrm{T}} \cdot \nabla V(\bm{q}) = \bm{0}$
for all $\bm{q}\in \R^{dN}$. Then, the total momentum, defined as $P_{n+1/2}:= (1,..,1)^{\mathrm{T}} \cdot \bm{p}_{n+1/2}$ for all $n\ge0$, is conserved. This follows by 
taking the product of $(1,...,1)^{\mathrm{T}}$ with Equation (\ref{eq:jumps2}) and using the null initialisation of the momentum jump which follows from Equation (\ref{eq:init_scheme}).
\end{remarque}

\begin{proposition} [Symmetry]
\label{sym and reverse}
If the quadrature
(\ref{eq:quadrature}) is exact or symmetric, 
then the scheme (\ref{schema multistep})  is symmetric (or time-reversible).
\end{proposition}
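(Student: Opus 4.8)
The plan is to recast the two-step scheme~\eqref{schema multistep} as a genuine one-step map on the augmented state $\bm{y}^n := (\bm{q}^n,\bm{p}^{n-1/2},\bm{p}^{n+1/2})$ and to show that this map commutes suitably with momentum reversal. Write $\bm{y}^{n+1}=\bm{\Phi}_{h_n}(\bm{y}^n)$ for the map defined by~\eqref{schema multistep} (with $\bm{p}^{n+3/2}$ obtained from~\eqref{eq:jumps2}, or from its quadrature replacement), and let $\bm{\rho}(\bm{q},\bm{p}_-,\bm{p}_+):=(\bm{q},-\bm{p}_+,-\bm{p}_-)$ be the momentum-reversal involution. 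I would take \emph{symmetric} (or time-reversible) to mean $\bm{\Phi}_{h_n}\circ\bm{\rho}\circ\bm{\Phi}_{h_n}=\bm{\rho}$, equivalently $\bm{\rho}\circ\bm{\Phi}_{h_n}\circ\bm{\rho}=\bm{\Phi}_{h_n}^{-1}$, and would prove this identity for each time-step.

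The one essential ingredient --- and the only place where the hypothesis on the quadrature is used --- is the symmetry of the discrete force term. First I would rewrite, using~\eqref{eq:free_flight} and the substitution $t\mapsto t^n+t^{n+1}-t$,
\[
\int_{I_n}\nabla V(\bm{\hat{q}}^n(t))\,dt = h_n\,\bm{G}(\bm{q}^n,\bm{q}^{n+1}),
\qquad
\bm{G}(\bm{a},\bm{b}):=\int_0^1\nabla V\bigl((1-u)\bm{a}+u\bm{b}\bigr)\,du,
\]
and note that the quadrature of~\eqref{eq:quadrature} similarly reads $h_n\sum_{i=0}^{I}\omega_i\,\nabla V(\lambda_i\bm{q}^n+(1-\lambda_i)\bm{q}^{n+1})$. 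In both cases the term depends on the two time nodes only through the endpoint positions $\bm{q}^n$ and $\bm{q}^{n+1}$, and the change of variable $u\mapsto 1-u$ (respectively, the reindexing $i\mapsto I-i$ combined with~\eqref{eq:symmetry}) shows that it is invariant under swapping these two arguments: $\bm{G}(\bm{a},\bm{b})=\bm{G}(\bm{b},\bm{a})$.

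With this in hand the verification is a short computation. Starting from $\bm{y}^n$, applying $\bm{\Phi}_{h_n}$ gives $\bm{y}^{n+1}=(\bm{q}^{n+1},\bm{p}^{n+1/2},\bm{p}^{n+3/2})$ with $\bm{q}^{n+1}=\bm{q}^n+h_n\bm{M}^{-1}\bm{p}^{n+1/2}$ and $\bm{p}^{n+3/2}=\bm{p}^{n-1/2}-2h_n\bm{G}(\bm{q}^n,\bm{q}^{n+1})$; applying $\bm{\rho}$ gives $(\bm{q}^{n+1},-\bm{p}^{n+3/2},-\bm{p}^{n+1/2})$; applying $\bm{\Phi}_{h_n}$ again, the position update returns $\bm{q}^{n+1}+h_n\bm{M}^{-1}(-\bm{p}^{n+1/2})=\bm{q}^n$ (the free-flight segment is retraced backwards), and the momentum-jump update produces $-\bm{p}^{n+3/2}-2h_n\bm{G}(\bm{q}^{n+1},\bm{q}^n)=-\bm{p}^{n+3/2}-2h_n\bm{G}(\bm{q}^n,\bm{q}^{n+1})=-\bm{p}^{n-1/2}$, where the first equality uses the symmetry of $\bm{G}$ and the second uses the expression for $\bm{p}^{n+3/2}$. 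Hence $\bm{\Phi}_{h_n}\circ\bm{\rho}\circ\bm{\Phi}_{h_n}(\bm{y}^n)=(\bm{q}^n,-\bm{p}^{n+1/2},-\bm{p}^{n-1/2})=\bm{\rho}(\bm{y}^n)$, which is the claimed identity.

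I expect the only real obstacle to be one of formulation rather than of mathematics: deciding what \emph{symmetric} should mean for a two-step scheme --- namely the relation above for the induced one-step map on $(\bm{q}^n,\bm{p}^{n-1/2},\bm{p}^{n+1/2})$ --- and keeping the half-integer momentum indices straight under reversal. The algebra itself collapses to a one-line cancellation once the symmetry $\bm{G}(\bm{a},\bm{b})=\bm{G}(\bm{b},\bm{a})$ is in place; conversely, this symmetry is precisely why exact integration and symmetric quadratures preserve time-reversibility, whereas a non-symmetric quadrature would break it.
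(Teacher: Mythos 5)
Your computation is correct, and the key ingredient is exactly the one the paper uses: the averaged force over a free-flight segment depends only on the endpoint positions through a function $\bm{G}(\bm{q}^n,\bm{q}^{n+1})$ that is invariant under swapping its arguments, by the substitution $u\mapsto 1-u$ in the exact case and by the reindexing $i\mapsto I-i$ together with \eqref{eq:symmetry} in the quadrature case. Where you genuinely diverge from the paper is in the \emph{notion} of symmetry you verify. The paper works with the state $\bm{Y}^n=(\bm{q}^n,\tfrac12(\bm{p}^{n-1/2}+\bm{p}^{n+1/2}),\bm{p}^{n+1/2}-\bm{p}^{n-1/2})$, defines the map $\bm{\Phi}_{h_n}$ for time-steps of either sign (hence the $\sign(h_n)$ factors), and proves $\bm{\Phi}_{h_n}\circ\bm{\Phi}_{-h_n}=\mathrm{Id}$, i.e.\ the method coincides with its adjoint. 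You instead prove $\rho$-reversibility, $\bm{\Phi}_{h}\circ\bm{\rho}\circ\bm{\Phi}_{h}=\bm{\rho}$ with the momentum-reversal involution $\bm{\rho}(\bm{q},\bm{p}_-,\bm{p}_+)=(\bm{q},-\bm{p}_+,-\bm{p}_-)$, which spares you from ever defining the scheme for negative time-steps and yields the physically transparent statement that reversing the momenta and stepping forward retraces the trajectory. The two properties are not literally the same: they are equivalent precisely when the compatibility relation $\bm{\rho}\circ\bm{\Phi}_{h}\circ\bm{\rho}=\bm{\Phi}_{-h}$ holds, which is easy to check here (both sides negate the velocity in the free flight and in the force average) but which you do not state. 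Since the proposition announces ``symmetric (or time-reversible)'' and the paper itself treats the two terms as interchangeable, your reading is defensible; if one wants the adjoint-symmetry $\bm{\Phi}_h^{*}=\bm{\Phi}_h$ specifically (e.g.\ as input to the even-order error expansion machinery), you would need to add the one-line verification of that compatibility relation, whereas the paper's route delivers it directly.
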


\begin{proof}  
Let $\bm{Y}^n = (
        \bm{q}^n,
        \frac{\bm{p}^{n-1/2}+\bm{p}^{n+1/2}}{2},
        \bm{p}^{n+1/2}-\bm{p}^{n-1/2})^{\mathrm{T}}$. Since we are going to consider positive and negative time-steps in this proof, we denote by $\sign(h_n)$ the sign of the time-step. 
The numerical scheme can be written as $\bm{Y}^{n+1} = \bm{\Phi}_{h_n}(\bm{Y}^n)$, where for a generic column vector 
$\bm{Y} = ( \bm{Y}_1,\bm{Y}_2,\bm{Y}_3)^{\mathrm{T}}$, we have
\[
\bm{\Phi}_{h_n}\left(\bm{Y}\right) = \left( \begin{gathered}
\bm{Y}_1 + h_n\bm{M}^{-1}\left(\bm{Y}_2+\frac{\sign(h_n)}{2}\bm{Y}_3\right) \\
\bm{Y}_2 - Q_n\left(\nabla
V\left(\bm{Y}_1+t\bm{M}^{-1}\left(\bm{Y}_2+\frac{\sign(h_n)}{2}\bm{Y}_3\right)\right)
; 0 ; h_n\right) \\
-\bm{Y}_3 - 2\sign(h_n)Q_n\left(\nabla
V\left(\bm{Y}_1+t\bm{M}^{-1}\left(\bm{Y}_2+\frac{\sign(h_n)}{2}\bm{Y}_3\right)\right)
; 0 ; h_n\right)
\end{gathered}
\right), 
\]
where we used the invariance by translation of the quadrature $Q_n$.
Therefore, we have
\[\bm{\Phi}_{-h_n}\left(\bm{Y}\right) = 
      \left( \begin{gathered}
\bm{Y}_1 - h_n\bm{M}^{-1}\left(\bm{Y}_2-\frac{\sign(h_n)}{2}\bm{Y}_3\right) \\
\bm{Y}_2 - Q_n\left(\nabla
V\left(\bm{Y}_1+t\bm{M}^{-1}\left(\bm{Y}_2-\frac{\sign(h_n)}{2}\bm{Y}_3\right)\right)
; 0 ; -h_n\right) \\
-\bm{Y}_3 + 2\sign(h_n)Q_n\left(\nabla
V\left(\bm{Y}_1+t\bm{M}^{-1}\left(\bm{Y}_2-\frac{\sign(h_n)}{2}\bm{Y}_3\right)\right)
; 0 ; -h_n\right)
\end{gathered}
\right).\]
It remains to verify that $\bm{\Phi}_{h_n} \circ \bm{\Phi}_{-h_n}\left(\bm{Y}\right)  = \bm{Y}$ using that the quadrature is symmetric or exact. To fix the ideas, we assume that $h_n>0$. Let us write $\bm{Y}'=\bm{\Phi}_{-h_n}(\bm{Y})$ and $\bm{Y}''=\bm{\Phi}_{h_n}(\bm{Y}')$. Since $\bm{Y}'_1=\bm{Y}_1 - h_n\bm{M}^{-1}(\bm{Y}_2-\frac{1}{2}\bm{Y}_3)$ and $\bm{Y}'_2+\frac12 \bm{Y}'_3=\bm{Y}_2-\frac12 \bm{Y}_3$, we infer that
\begin{equation}\label{eq:reversibility1}
\bm{Y}''_1 = \bm{Y}'_1 + h_n\bm{M}^{-1}\left(\bm{Y}'_2+\frac{1}{2}\bm{Y}'_3\right) = \bm{Y}_1.
\end{equation}
For the second component, we obtain
\[
\begin{aligned}
\bm{Y}''_2 &= \bm{Y}'_2 - Q_n\left(\nabla
V\left(\bm{Y}_1+(t-h_n)\bm{M}^{-1}\left(\bm{Y}_2-\frac{1}{2}\bm{Y}_3\right)\right)
; 0 ; h_n\right)\\
&= \bm{Y}'_2 - Q_n\left(\nabla
V\left(\bm{Y}_1+t\bm{M}^{-1}\left(\bm{Y}_2-\frac{1}{2}\bm{Y}_3\right)\right)
; -h_n ; 0\right)\\
&= \bm{Y}'_2 + Q_n\left(\nabla
V\left(\bm{Y}_1+t\bm{M}^{-1}\left(\bm{Y}_2-\frac{1}{2}\bm{Y}_3\right)\right)
; 0 ; -h_n\right) = \bm{Y}_2,
\end{aligned}
\]
where we used (\ref{eq:reversibility1}) in the first line and
invariance by translation and symmetry of the quadrature $Q_n$ in the
second and third lines respectively. The proof that 
$\bm{Y}''_3 = \bm{Y}_3$ follows along similar lines.
\end{proof}

\begin{theorem} [Consistency error]
\label{order of the method}
Assume that $V \in C^2(\mathbb{R}^{dN};\R)$. If the quadrature
(\ref{eq:quadrature}) is exact or at least of order two, the scheme~\eqref{schema CM} has second-order accuracy in time.
\end{theorem}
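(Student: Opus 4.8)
The plan is to establish second-order accuracy by comparing one step of the scheme, started from exact data, against the exact flow of \eqref{eq:hamilton}, and to show the local truncation error is $\mathcal{O}(h_n^3)$ so that the global error is $\mathcal{O}(h^2)$. First I would fix $n$, assume that $\bm{q}^n = \bm{q}(t^n)$ and that the half-step momenta agree with the exact momentum up to the appropriate order — more precisely, since the natural continuous quantity associated with $\bm{p}^{n\pm 1/2}$ is $\bm{p}(t^n)$, I would posit $\bm{p}^{n-1/2} = \bm{p}(t^n) + \mathcal{O}(h^2)$ and $\bm{p}^{n+1/2} = \bm{p}(t^n) + \mathcal{O}(h)$ (the latter being weaker because the momentum jump over one step is itself $\mathcal{O}(h)$), consistently with the initialization \eqref{eq:init_scheme_bis} where $\bm{p}^{-1/2}=\bm{p}^{1/2}=\bm{p}(t^0)$. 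Working with the two-step form \eqref{schema multistep} is cleanest: the position update $\bm{q}^{n+1} = \bm{q}^n + h_n \bm{M}^{-1}\bm{p}^{n+1/2}$ and the momentum update $\bm{p}^{n+3/2} = \bm{p}^{n-1/2} - 2\int_{I_n}\nabla V(\bm{\hat q}^n(t))\,dt$ are then Taylor-expanded around $t^n$.

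The key steps are: (i) expand the free-flight trajectory, $\bm{\hat q}^n(t) = \bm{q}(t^n) + (t-t^n)\bm{M}^{-1}\bm{p}^{n+1/2}$, and compare it with the exact trajectory $\bm{q}(t)$; using $\dot{\bm{q}} = \bm{M}^{-1}\bm{p}$ and $\bm{p}^{n+1/2} = \bm{p}(t^n) + \mathcal{O}(h)$ one gets $\bm{\hat q}^n(t) - \bm{q}(t) = \mathcal{O}(h^2)$ uniformly on $I_n$; (ii) insert this into $\int_{I_n}\nabla V(\bm{\hat q}^n(t))\,dt$, using the $C^2$ regularity of $V$ (so $\nabla V$ is $C^1$, hence Lipschitz on the relevant compact set) to get $\int_{I_n}\nabla V(\bm{\hat q}^n(t))\,dt = \int_{I_n}\nabla V(\bm{q}(t))\,dt + \mathcal{O}(h^3) = -\tfrac12(\bm{p}(t^{n+1}) - \bm{p}(t^{n-1})) + \mathcal{O}(h^3)$ via the midpoint-type expansion of the exact momentum equation; (iii) account for the quadrature, which by \eqref{eq:order2} contributes another $\mathcal{O}(h_n^3)$; (iv) combine to show $\bm{q}^{n+1} = \bm{q}(t^{n+1}) + \mathcal{O}(h^3)$ and $\bm{p}^{n+3/2} = \bm{p}(t^{n+1}) + \mathcal{O}(h^2)$ for the chosen continuous reference values, i.e. the scheme reproduces the exact solution to one order higher than the claimed global order; (v) conclude by a standard Lady Windermere's fan / discrete Gronwall argument, using the symmetry established in Proposition \ref{sym and reverse} if one wants to upgrade an a priori $\mathcal{O}(h)$ bound to $\mathcal{O}(h^2)$, or more directly by propagating the local errors through the (locally Lipschitz) scheme map over a bounded time interval.

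The main subtlety — and the step I would be most careful about — is the bookkeeping of the half-index momenta: because $\bm{p}^{n+1/2}$ lags $\bm{p}(t^n)$ only at first order while $\bm{p}^{n-1/2}$ agrees at second order, one must verify that the staggered structure does not degrade the position error, and that the momentum-jump equation \eqref{eq:jumps2}, which relates $\bm{p}^{n+3/2}-\bm{p}^{n-1/2}$ to a full integral over $I_n$ (effectively a leapfrog-type centered difference over $2h_n$), indeed delivers the cancellation needed for second order. A clean way to organize this is to note that the scheme, in the form \eqref{schema multistep}, is a perturbation of the Störmer--Verlet / leapfrog scheme — for which second-order accuracy is classical — the perturbation being the replacement of $h_n\nabla V(\bm{q}^n)$ (or a midpoint value) by the time-integral of $\nabla V$ along the free flight; since that integral differs from the leapfrog force term by $\mathcal{O}(h^3)$ (again by $C^2$ regularity and the $\mathcal{O}(h^2)$ trajectory error), the accuracy order is inherited. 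I would also record explicitly that all constants depend on bounds for $\nabla V$ and $\nabla^2 V$ on a compact neighborhood of the exact trajectory, which is legitimate on a finite time horizon.
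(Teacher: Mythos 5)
Your overall strategy (a one-step defect of size $\mathcal{O}(h_n^3)$ followed by a stability/Gronwall argument) is reasonable in principle, and your trajectory estimate $\bm{\hat q}^n(t)-\bm{q}(t)=\mathcal{O}(h_n^2)$ and your handling of the quadrature error are fine. The gap is exactly in the staggered-momentum bookkeeping that you yourself flag as the delicate point. By associating both $\bm{p}^{n\pm1/2}$ with $\bm{p}(t^n)$ you are led to claim
\[
\int_{I_n}\nabla V(\bm{q}(t))\,dt=-\tfrac12\bigl(\bm{p}(t^{n+1})-\bm{p}(t^{n-1})\bigr)+\mathcal{O}(h_n^3),
\]
which is false: the left-hand side equals $\bm{p}(t^n)-\bm{p}(t^{n+1})$ exactly, whereas (for a constant step $h$) the right-hand side equals $\bm{p}(t^n)-\bm{p}(t^{n+1})+\tfrac12 h^2\ddot{\bm{p}}(t^n)+\mathcal{O}(h^3)$, a discrepancy of $\mathcal{O}(h^2)$. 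The momentum defect per step is therefore only $\mathcal{O}(h^2)$ with your choice of reference values, which gives first-order consistency after division by $h_n$ and only $\mathcal{O}(h)$ after a naive summation. The same problem hits the position: from $\bm{p}^{n+1/2}=\bm{p}(t^n)+\mathcal{O}(h)$ you can only conclude $\bm{q}^{n+1}=\bm{q}(t^n)+h\bm{M}^{-1}\bm{p}(t^n)+\mathcal{O}(h^2)=\bm{q}(t^{n+1})+\mathcal{O}(h^2)$, not the $\mathcal{O}(h^3)$ you assert in step (iv). Moreover the hypothesis $\bm{p}^{n-1/2}=\bm{p}(t^n)+\mathcal{O}(h^2)$ cannot be propagated: already $\bm{p}^{1/2}=\bm{p}(t^0)$ differs from $\bm{p}(t^1)$ by $h\dot{\bm{p}}(t^0)+\mathcal{O}(h^2)=\mathcal{O}(h)$, and your output $\bm{p}^{n+3/2}=\bm{p}(t^{n+1})+\mathcal{O}(h^2)$ contradicts your input $\bm{p}^{n+1/2}=\bm{p}(t^n)+\mathcal{O}(h)$ shifted by one step.

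The repair is to center the momentum reference at the half-nodes, i.e.\ to compare $\bm{p}^{n+1/2}$ with $\bm{p}(t^{n+1/2})$. Then $\tfrac12\bigl(\bm{p}(t^{n+3/2})-\bm{p}(t^{n-1/2})\bigr)=h_n\dot{\bm{p}}(t^{n+1/2})+\mathcal{O}(h_n^3)$ is a genuine centered difference about $t^{n+1/2}$, and $\int_{I_n}\nabla V(\bm{q}(t))\,dt=h_n\nabla V(\bm{q}(t^{n+1/2}))+\mathcal{O}(h_n^3)$ by the midpoint rule, so the two cancel to the required order; likewise the position update becomes a midpoint formula and gains an order. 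This is precisely what the paper does: it evaluates the defect of the map $\bm{\Phi}_{h_n}$ on the reference vector $\bigl(\bm{q}(t^n),\tfrac12(\bm{p}(t^{n-1/2})+\bm{p}(t^{n+1/2})),\bm{p}(t^{n+1/2})-\bm{p}(t^{n-1/2})\bigr)$ and expands everything about $t^{n+1/2}$ (note that the paper stops at consistency and does not carry out the global convergence step you sketch). Your closing suggestion of viewing the scheme as an $\mathcal{O}(h_n^3)$ perturbation of St\"ormer--Verlet is a viable alternative, but it runs into the same requirement: the comparison must be organized around the half-nodes (or around the telescoping of the momentum jumps) for the advertised cancellation to materialize.
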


\begin{proof} 
Let $\bm{q}(t),\bm{p}(t)$ be the exact solution to (\ref{eq:hamilton}).
Let us consider the column vector 
$\bm{Y}(t^n) = (\bm{q}(t^n),\frac{\bm{p}(t^{n-1/2})+\bm{p}(t^{n+1/2})}{2},\bm{p}(t^{n+1/2})-\bm{p}(t^{n-1/2}))^{\mathrm{T}}$. The consistency error is defined as
\[\bm{\eta}^{n+1}
:= \frac{\bm{Y}(t^{n+1})-\bm{\Phi}_{h_n}(\bm{Y}(t^{n}))}{h_n},
\]
where $\bm{\Phi}_{h_n}$ is defined in the previous proof. Let us denote by
$\bm{\eta}_1^{n+1},\bm{\eta}_2^{n+1},\bm{\eta}_3^{n+1}$ the three components of the consistency error. 
We have
\begin{align*}
h_n \bm{\eta}_1^{n+1} &= \bm{q}(t^{n+1})-\bm{q}(t^n)-h_n\bm{M}^{-1}\bm{p}(t^{n+1/2}),\\
h_n \bm{\eta}_2^{n+1} &= \frac{\bm{p}(t^{n+3/2})-\bm{p}(t^{n-1/2})}{2}+Q_n\left(\nabla
V(\bm{\check{q}}^n(t));t^n;t^{n+1}\right),\\
h_n \bm{\eta}_3^{n+1} &= 2h_n\bm{\eta}_2^{n+1},
\end{align*}
where $\bm{\check{q}}^n(t) = \bm{q}(t^n) + \bm{M}^{-1}\bm{p}(t^{n+1/2})(t-t^n)$.
Using a Taylor expansion and the equation $\dot{\bm{q}}(t) = \bm{M}^{-1}\bm{p}(t)$, we infer that 
\[
h_n \bm{\eta}_1^{n+1} =
h_n\dot{\bm{q}}(t^{n+1/2})-h_n\bm{M}^{-1}\bm{p}(t^{n+1/2})+\mathcal{O}(h_n^3)
= \mathcal{O}(h_n^3).
\]
Moreover, since the quadrature is of second-order (at least) so that it can be replaced by the mid-point quadrature up to $\mathcal{O}(h_n^3)$, and using the equations $\dot{\bm{q}}(t) = \bm{M}^{-1}\bm{p}(t)$ and $\dot{\bm{p}}(t) = -\nabla V(\bm{q})(t)$, we obtain
\begin{align*}
h_n {\bm{\eta}}_2^{n+1} &= h_n\dot{\bm{p}}(t^{n+1/2})-h_n\nabla
V({\bm{\check{q}}}(t^{n+1/2}))+\mathcal{O}(h_n^3) \\
&= h_n\dot{\bm{p}}(t^{n+1/2})-h_n\nabla
V(\bm{q}(t^{n})+ \tfrac12 h_n \dot{\bm{q}}(t^{n+1/2}))+\mathcal{O}(h_n^3) \\
&= h_n\dot{\bm{p}}(t^{n+1/2})-h_n\nabla
V(\bm{q}(t^{n+1/2}))+\mathcal{O}(h_n^3) = \mathcal{O}(h_n^3).
\end{align*}
We conclude that $\bm{\eta}^{n+1}=\mathcal{O}(h_n^2)$,
i.e., the scheme is second-order accurate.
\end{proof}

\begin{proposition} [Linear stability] 
\label{absolute stability}
Assume that the potential $V$ is quadratic with a positive definite 
Hessian $\bm{H}:=D^2V$. Let $\lambda$ be
the largest eigenvalue of $\bm{H}$. Let $\mu>0$ be the
smallest eigenvalue of $\bm{M}$.  Then the scheme \eqref{schema CM} 
is conditionally stable for a constant time-step $h$
under the following CFL condition:
\begin{equation}\label{eq:abs_stab}
h \ < \ 2\sqrt{\frac{\mu}{\lambda}}.
\end{equation}
\end{proposition}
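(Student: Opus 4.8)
The plan is to exploit that, for a quadratic potential, the scheme is linear with constant coefficients, so that stability reduces to a root (spectral) condition. First, $\nabla V(\bm{q}) = \bm{H}\bm{q}+\bm{c}$ is affine, hence $t\mapsto\nabla V(\bm{\hat q}^n(t))$ is affine along the free flight~\eqref{eq:free_flight}; consequently the (at least) second-order quadrature~\eqref{eq:quadrature} integrates it exactly, and the constant $\bm{c}$ only shifts the equilibrium and does not affect stability, so I may take $\bm{c}=\bm{0}$. Evaluating the integral in~\eqref{eq:jumps2}, the 2-step form~\eqref{schema multistep} becomes
\begin{equation*}
\bm{q}^{n+1} = \bm{q}^n + h\,\bm{M}^{-1}\bm{p}^{n+1/2}, \qquad
\tfrac12\big(\bm{p}^{n+3/2}-\bm{p}^{n-1/2}\big) = -\,h\,\bm{H}\bm{q}^n - \tfrac{h^2}{2}\,\bm{H}\bm{M}^{-1}\bm{p}^{n+1/2}.
\end{equation*}
Since $\bm{M}$ and $\bm{H}$ are symmetric positive definite, $\bm{M}^{-1}\bm{H}$ is similar to the symmetric matrix $\bm{M}^{-1/2}\bm{H}\bm{M}^{-1/2}$, hence diagonalizable with a real eigenbasis and positive eigenvalues $\omega_1^2,\dots,\omega_{dN}^2$. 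Writing $\bm{T}^{-1}\bm{M}^{-1}\bm{H}\bm{T}=\mathrm{diag}(\omega_1^2,\dots,\omega_{dN}^2)$ and substituting $\bm{q}^n=\bm{T}\tilde{\bm q}^n$, $\bm{p}^{n\pm1/2}=\bm{M}\bm{T}\tilde{\bm p}^{n\pm1/2}$ decouples the recursion into $dN$ independent scalar copies, one for each value $\omega^2\in\{\omega_1^2,\dots,\omega_{dN}^2\}$.

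Eliminating the momenta in favour of the positions, each scalar copy is the three-term recursion
\begin{equation*}
q^{n+2} + (a-1)\,q^{n+1} + (a-1)\,q^{n} + q^{n-1} = 0, \qquad a := h^2\omega^2 > 0,
\end{equation*}
with characteristic polynomial $\chi(z) = z^3 + (a-1)z^2 + (a-1)z + 1$. I would then note that $\chi$ is self-reciprocal (mirroring the time-reversibility of Proposition~\ref{sym and reverse}) and that $\chi(-1)=0$ for every $a$, whence $\chi(z) = (z+1)\big(z^2+(a-2)z+1\big)$. The quadratic factor has roots of product one, and its discriminant $(a-2)^2-4$ is strictly negative precisely when $0<a<4$; in that range the two roots are a complex-conjugate pair on the unit circle, distinct from each other and from $-1$ (the quadratic takes the value $4-a\neq0$ at $z=-1$). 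Hence for $0<a<4$ the three roots of $\chi$ are simple and of modulus one, so by the standard root condition for linear difference equations (equivalently, the companion matrix is power-bounded) the scalar scheme is stable. For $a=4$ one gets $\chi(z)=(z+1)^3$ with a defective companion matrix, and for $a>4$ the quadratic has a real root of modulus exceeding one; in both cases the iterates are unbounded, so the threshold $a<4$ is sharp.

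Finally, stability of the full scheme amounts to $h^2\omega_k^2<4$ for every $k$. By the min-max characterization of generalized eigenvalues,
\begin{equation*}
\max_{1\le k\le dN}\omega_k^2 \;=\; \max_{\bm{x}\neq\bm{0}} \frac{\bm{x}^{\mathrm{T}}\bm{H}\bm{x}}{\bm{x}^{\mathrm{T}}\bm{M}\bm{x}} \;\le\; \frac{\lambda}{\mu},
\end{equation*}
using $\bm{x}^{\mathrm{T}}\bm{H}\bm{x}\le\lambda\,\bm{x}^{\mathrm{T}}\bm{x}$ and $\bm{x}^{\mathrm{T}}\bm{M}\bm{x}\ge\mu\,\bm{x}^{\mathrm{T}}\bm{x}$. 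Therefore $h<2\sqrt{\mu/\lambda}$ forces $h^2\omega_k^2\le h^2\lambda/\mu<4$ for all $k$, which gives~\eqref{eq:abs_stab}. I expect the only delicate point to be the behaviour on the unit circle: one must verify that the recurring root $z=-1$ stays simple throughout $0<a<4$ and that the conjugate pair never collides, so that the amplification is genuinely power-bounded rather than merely of spectral radius one --- the degenerate case $a=4$, where $z=-1$ becomes a defective triple root, being exactly what makes the inequality in~\eqref{eq:abs_stab} strict.
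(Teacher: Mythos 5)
Your proof is correct and follows essentially the same route as the paper's: exact evaluation of the force integral for an affine $\nabla V$, simultaneous diagonalization of $\bm{M}$ and $\bm{H}$ to decouple the modes, factorization of the resulting cubic characteristic polynomial into a fixed unimodular root times $z^2+(h^2\omega^2-2)z+1$, simplicity and unimodularity of the roots precisely when $h^2\omega^2<4$, and the Rayleigh-quotient bound $\omega^2\le\lambda/\mu$. The only (cosmetic) difference is that you eliminate the momenta to work with a scalar three-term recursion and its companion matrix, whereas the paper keeps the $3\times 3$ block of the amplification matrix directly; the two characteristic polynomials agree up to the sign change $z\mapsto -z$.
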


\begin{proof}
Since the potential $V$ is quadratic, the dynamical system \eqref{eq:hamilton}
is linear. Let $\bm{Z}^n$ be the column vector such that $\bm{Z}^n = \left(
\bm{q}^n,\bm{p}^{n-1/2},\bm{p}^{n+1/2}\right)^{\mathrm{T}}$. Adding a linear functional to $V$ does not change the nature of the Hamiltonian system. We thus consider $\nabla V(\bm{0}) = \bm{0}$ and $V(\bm{0}) = 0$. Since $\nabla V(\bm{q})$ is by assumption linear in $\bm{q}$, we have $\nabla V(\bm{\hat{q}}^n(t)) = \nabla V(\bm{q}^n) + (t-t^n)\bm{H}\bm{M}^{-1} \bm{p}^{n+1/2}$, so that
\[
\int_{I_n} \nabla V(\bm{\hat{q}}^n(t)) dt = h \bm{H}\bm{q}^n + \frac12 h^2 \bm{H}\bm{M}^{-1} \bm{p}^{n+1/2}.
\]
Therefore, the scheme (\ref{schema CM}) can be written as $\bm{Z}^{n+1} = \bm{A} \bm{Z}^n$ with 
\[\bm{A} = \left(
      \begin{array}{ccc}
        \bm{I}_{dN} & \bm{0}_{dN} & h\bm{M}^{-1} \\
        \bm{0}_{dN} & \bm{0}_{dN} & \bm{I}_{dN} \\
        -2 h \bm{H} & \bm{I}_{dN} & -h^2\bm{H}\bm{M}^{-1}
      \end{array}
\right).
\] 
The matrix
$\bm{M}$ being symmetric definite positive, its square root $\bm{M}^{1/2}$ 
is well-defined. We then observe that
\begin{align*}
\tilde{\bm{A}} &= \left(\begin{array}{ccc}\bm{M}^{1/2} & \bm{0}_{dN}
& \bm{0}_{dN} \\ \bm{0}_{dN} & \bm{M}^{-1/2}
& \bm{0}_{dN} \\ \bm{0}_{dN} & \bm{0}_{dN}
& \bm{M}^{-1/2}\end{array}\right) \bm{A} \left(\begin{array}{ccc}\bm{M}^{-1/2} & \bm{0}_{dN}
& \bm{0}_{dN} \\ \bm{0}_{dN} & \bm{M}^{1/2}
& \bm{0}_{dN} \\ \bm{0}_{dN} & \bm{0}_{dN}
& \bm{M}^{1/2}\end{array}\right) \\
&= \left(
      \begin{array}{ccc}
        \bm{I}_{dN} & \bm{0}_{dN} & h\bm{I}_{dN} \\
        \bm{0}_{dN} & \bm{0}_{dN} & \bm{I}_{dN} \\
        -2 h \bm{S} & \bm{I}_{dN} & -h^2\bm{S}
      \end{array}
\right),
\end{align*}
where we introduced the symmetric positive definite matrix $\bm{S} =
\bm{M}^{-1/2}\bm{H}\bm{M}^{-1/2}$. Up 
to an adequate change of variable for each of the
coordinates, it is possible to assume that $\bm{S}$ is
diagonal. 
Denoting $(\sigma_i)_{1\leq i\leq dN}$ the eigenvalues of
$\bm{S}$ and scaling the momenta in $\bm{Z}_n$ by the factors 
$(\sqrt{2\sigma_i})_{1\leq i\leq dN}$, $-\tilde{\bm{A}}$ is block diagonal in
the following matrices of order 3, for all $i\in\{1,\dots,dN\}$:

\begin{equation*}
a_{i} = \left(
      \begin{array}{ccc}
        -1 & 0 & -h\sqrt{2\sigma_i} \\
        0 & 0 & -1 \\
        h \sqrt{2\sigma_i} & -1 & h^2\sigma_i
      \end{array}
\right).
\end{equation*}
The characteristic polynomial $\chi_{a_i}$ of $a_i$ is 
$\chi_{a_i}(X) = (X-1) (X^2 - X (h^2 \sigma_i - 2) + 1 )$, 
which shows that $1$ is an eigenvalue of $a_i$. Moreover,
the polynomial $X^2 - X (h^2 \sigma_i - 2) + 1$ is positive as long as
$h \  <  \ \frac{2}{\sqrt{\sigma_i}}$, and the two complex conjugate
eigenvalues, written $b_i$ and $\overline{b}_i$, have a modulus equal
to $1$ and a nonzero imaginary part. Thus, the three eigenvalues are distinct, and the $3{\times}3$ matrix $a_i$ can be diagonalized for all $i\in\{1,\dots,dN\}$. 
Let $P_i$ the matrix such that $P_i^{-1} a_i P_i = \mathrm{Diag}(1,b_i,\overline{b}_i)$.
Then, writing $P^{-1}$ the block-diagonal matrix composed of the elementary matrices $(P^{-1}_i)_{1\le i\le dN}$ and $D$ the block-diagonal matrix composed of the diagonal matrices of eigenvalues of $(a_i)_{1\le i\le dN}$, we infer that, for all $n \in \mathbb{N}$:
\[\Vert \tilde{\bm{A}}^n \Vert \leq \Vert P^{-1} \Vert \Vert P \Vert \Vert D \Vert^n \leq \Vert P^{-1} \Vert \Vert P \Vert,  \]
because the diagonal matrix $D$ has diagonal entries of modulus 1, and thus $\Vert D \Vert = 1$. Hence, for all $n \in \mathbb{N}$, we obtain $\Vert \bm{Z}^n \Vert  \leq C \Vert \bm{Z}^0 \Vert$, for a constant $C$ independent of $n$.
Since the eigenvalues $\sigma_i$ of $\bm{S}$ are positive and smaller than
$\frac{\lambda}{\mu}$, we conclude that linear stability holds true under the CFL condition \eqref{eq:abs_stab}.

\end{proof}

\begin{remarque} [Comparison with St\"ormer--Verlet]
A possible writing of the St\"ormer--Verlet method is the following:
\begin{equation*}
\begin{aligned}
\bm{q}^{n+1} &= \bm{q}^n + h_n \bm{M}^{-1}\bm{p}^{n+1/2},\\
\bm{p}^{n+3/2} &= \bm{p}^{n+1/2} - h_{n+1} \nabla V(\bm{q}^{n+1}). \\
\end{aligned}
\end{equation*}
Both the present scheme and the St\"ormer--Verlet scheme 
are of leapfrog-type, 
have a similar CFL condition for linear stability, and are 
second-order accurate.
The main difference is that, using a mid-point quadrature, the forces used to update the momenta at $t^{n+1}$ are computed at $t^{n+1/2}$ with the present scheme, i.e., $\frac12(\bm{p}^{n+3/2}-\bm{p}^{n-1/2})=-h_n\nabla V(\bm{\hat{q}}(t^{n+1/2}))$, whereas the momentum update can be rewritten as
\[\frac12(\bm{p}^{n+3/2}-\bm{p}^{n-1/2})=-\frac12\Big(h_{n+1} \nabla V(\bm{q}^{n+1})+
h_{n} \nabla V(\bm{q}^{n})\Big),\] 
in the St\"ormer--Verlet scheme. Moreover,
the origin of energy conservation is different for the two schemes. The St\"ormer--Verlet scheme is energy-conserving only for constant time-steps due to its symplecticity. The present scheme enjoys an algebraic pseudo-energy preservation property for every time-step (constant or not) up to quadrature errors.
\end{remarque}

\begin{remarque}[Adaptive time-stepping for discrete energy control]
\label{rq:a posteriori}
The conservation of the pseudo-energy does not imply
stability since
$\left(\bm{p}^{n-1/2}\right)^{\mathrm{T}} \bm{M}^{-1} \bm{p}^{n+1/2}$
does not have a sign a priori. However, 
defining the discrete energy 
\begin{equation}
\label{eq:discrete energy bis}
H^n := V(\bm{q}^n) + \frac{1}{8} \left( \bm{p}^{n-1/2}
+ \bm{p}^{n+1/2} \right)^{\mathrm{T}} \bm{M}^{-1} \left( \bm{p}^{n-1/2}
+ \bm{p}^{n+1/2} \right),
\end{equation} 
a straightforward calculation shows that
\begin{equation} \label{eq:identity_H}
H^n = \tilde{H}^n + \frac{1}{8}([\bm{p}]^n)^{\mathrm{T}} M^{-1}
[\bm{p}]^n = \tilde{H}^n + \frac{1}{8} \vert M^{-1/2}
[\bm{p}]^n \vert^2.
\end{equation}
This implies that 
\[0\leq H^n-\tilde{H}^n\leq \mathcal{O}(h_n^2),\]
where we used the identity \eqref{eq:identity_H} for the lower bound and we invoked Theorem \ref{order of the method} for the upper bound. One can use the identity \eqref{eq:identity_H} during the computations for an on-the-fly monitoring of possible departures of the conserved pseudo-energy $\tilde{H}^n$ from the discrete energy $H^n$. The idea is to check whether $\frac{1}{8} \vert M^{-1/2} [\bm{p}]^n \vert^2 \le \epsilon_{\textrm{fly}} \tilde{H}^n$ after every momenta computation and to halve the time-step if this bound is not met (note that the momentum jumps converge to zero with the time-step). The benefits of such an adaptive time-stepping strategy are illustrated in Section \ref{sec:nonlinear wave equation}.
\end{remarque}


\section{Numerical results}
\label{sec:results}

In this section, we present numerical results for the scheme \eqref{schema CM}. We consider classical benchmarks from the literature and a nonlinear wave equation from~\cite{chabassier2010energy}. 

\subsection{Convergence study}
\label{sec:convergence study one particle}
We perform a convergence study with a single particle in
dimension $d=1$.
The reference solution is $q(t) = \sin(t)^4 + 1$, and 
the corresponding potential energy is
\[V(q(t)) = 8 \left( (q(t)-1)^{3/2} - (q(t)-1)^2 \right). \]
We apply the scheme \eqref{schema CM} to this Hamiltonian system 
over $10^3$ seconds using the mid-point quadrature 
as well as the three- and five-point Gauss--Lobatto 
quadratures of order 3 and 7, respectively, 
for the integration of the forces.
We report the $\ell_1$-error with respect to the reference solution
(the sum of the errors at the discrete time nodes
divided by the number of time-steps) 
in Figure \ref{l1-convergence method} as a function of the
number of force evaluations. We observe that for the three
quadratures, the convergence is of
second order as expected. The quadrature order does not impact the
convergence rate but has an influence on the computational
efficiency. We note that in this case, the mid-point quadrature
is more efficient than the three- and five-point Gauss--Lobatto quadratures of order 3 and 7.

\begin{figure}[!htp]
\begin{center}
\includegraphics[scale=0.5]{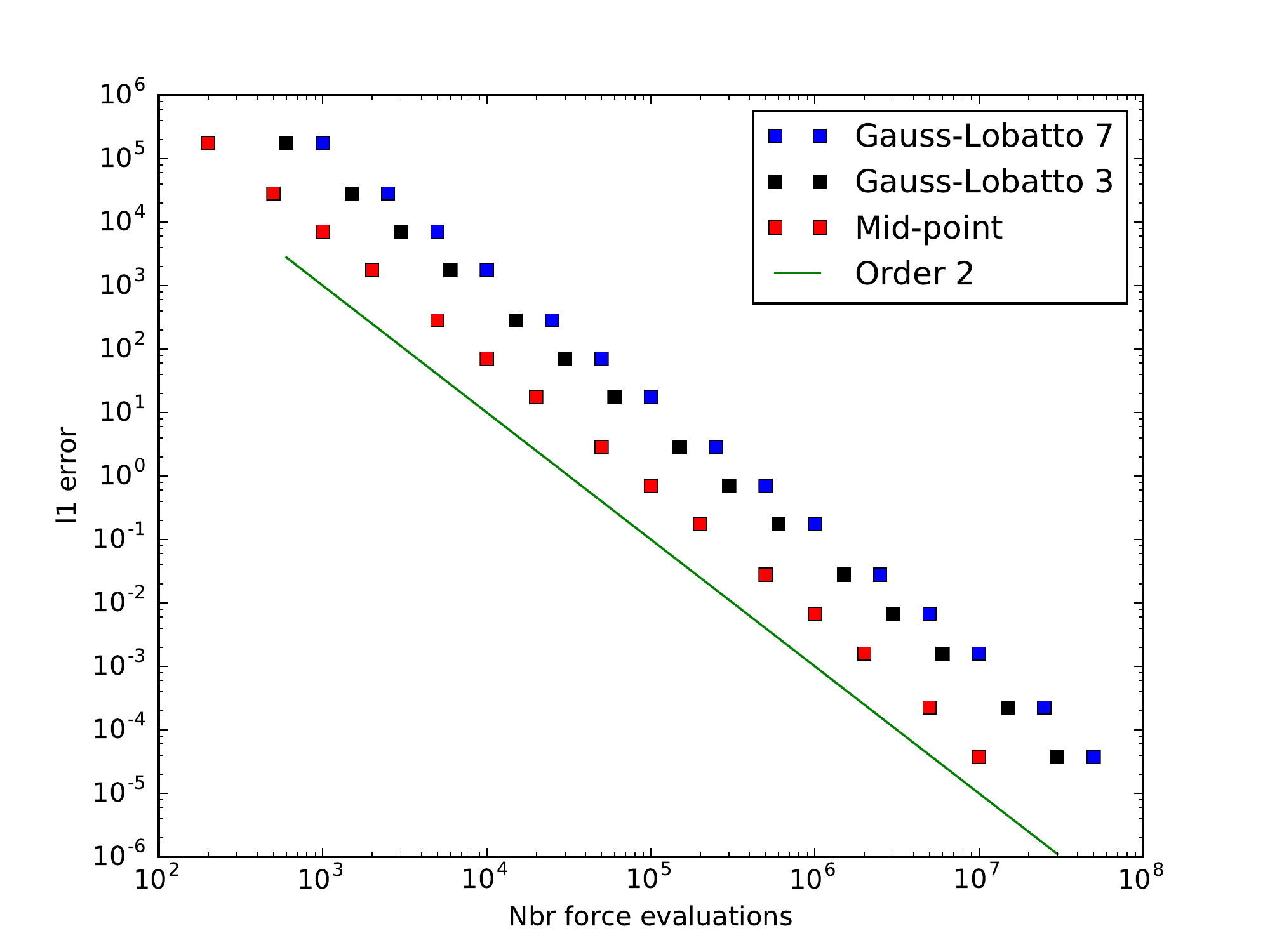}
\caption{Convergence test: $\ell_1$-convergence for a single particle}
\label{l1-convergence method}
\end{center}
\end{figure}


\subsection{Fermi--Pasta--Ulam}
This test case was proposed
in~\cite[Chap. I.4]{hairer2006geometric}. It consists in having stiff
linear springs linked to soft nonlinear springs in an alternating
way, in dimension $d=1$. Figure \ref{schema_fermi} illustrates the setting.
\begin{figure}[!htp]
\begin{center}
\begin{tikzpicture}
\draw[fill=black!10!white, draw=none] (-0.7,-1.5) rectangle (0.,1.5);
\draw[pattern=north east lines, pattern color=black, draw=none] (-0.7,-1.5) rectangle (0.,1.5);
\draw[thick] (0,-1.5) -- (0,1.5);
\def\coilfine#1{
        {#1 +  (1.5-0.4)/7*\t - 0.2*(cos(\t * pi r)-1)},
        {0.4 * sin(\t * pi r)}
        }
\def\coilthick#1{
        {#1 +  (1.5-0.2)/15*\t - 0.1*(cos(\t * pi r)-1)},
        {-0.4 * sin(\t * pi r)}
        }
\draw[domain={0:7},smooth,variable=\t,samples=100] plot (\coilfine{0});
\draw[domain={0:15},smooth,variable=\t,samples=100, thick] plot (\coilthick{1.5});
\draw[domain={0:7},smooth,variable=\t,samples=100] plot (\coilfine{3});
\draw[domain={0:15},smooth,variable=\t,samples=100, thick] plot (\coilthick{4.5});
\draw[domain={0:7},smooth,variable=\t,samples=100] plot (\coilfine{6});
\draw[domain={0:15},smooth,variable=\t,samples=100, thick] plot (\coilthick{7.5});
\draw[domain={0:7},smooth,variable=\t,samples=100] plot (\coilfine{9});
\draw[fill=black!10!white, draw=none] (10.5,-1.5) rectangle (11.2,1.5);
\draw[pattern=north east lines, pattern color=black, draw=none] (10.5,-1.5) rectangle (11.2,1.5);
\draw[thick] (10.5,-1.5) -- (10.5,1.5);
\draw[fill] (0,0) circle [radius=3pt];
\draw[fill] (1.5,0) circle [radius=3pt];
\draw[fill] (3,0) circle [radius=3pt];
\draw[fill] (4.5,0) circle [radius=3pt];
\draw[fill] (6,0) circle [radius=3pt];
\draw[fill] (7.5,0) circle [radius=3pt];
\draw[fill] (9,0) circle [radius=3pt];
\draw[fill] (10.5,0) circle [radius=3pt];
\draw (1.5,0.4) node[anchor=south]{$q_1$};
\draw (3,0.4) node[anchor=south]{$q_2$};
\draw (5.25,0.4) node[anchor=south]{$\cdots$};
\draw (7.5,0.4) node[anchor=south]{$q_{2m-1}$};
\draw (9,0.4) node[anchor=south]{$q_{2m}$};
\draw (2.25,-0.4) node[anchor=north]{\begin{minipage}{2cm}\centering
        stiff harmonic\end{minipage}};
\draw (6.75,-0.4) node[anchor=north]{\begin{minipage}{2cm}\centering
        soft nonlinear\end{minipage}};
\end{tikzpicture}
\end{center}
\caption{Fermi--Pasta--Ulam test case}
\label{schema_fermi}
\end{figure}

The Hamiltonian is \[H(\bm{q},\bm{p}) = \frac{1}{2} \sum_{i=1}^m (p^2_{2i-1} + p^2_{2i}) + \frac{\omega^2}{4} \sum_{i=1}^m (q_{2i} - q_{2i-1})^2 + \sum_{i=0}^m (q_{2i+1} - q_{2i})^4, \]
with typically $\omega\gg1$.
Introducing the variables
\begin{alignat*}{2}
x_i &= (q_{2i} + q_{2i-1})/ \sqrt{2}, &\qquad y_i &= (p_{2i} + p_{2i-1})/ \sqrt{2}, \\
x_{m+i} &= (q_{2i} - q_{2i-1})/ \sqrt{2}, &\qquad y_{m+i} &= (p_{2i} - p_{2i-1})/ \sqrt{2}, \\
\end{alignat*}
the Hamiltonian can be rewritten as
\begin{multline}
H(\bm{x},\bm{y}) = \frac{1}{2} \sum_{i=1}^{2m} y_i^2 + \frac{\omega^2}{2} \sum_{i=1}^m x_{m+i}^2 \\ + \frac{1}{4} \left( (x_1 - x_{m+1})^4 + \sum_{i=1}^{m-1} (x_{i+1} - x_{m+i+1} - x_i - x_{m+i})^4 + (x_m + x_{2m})^4 \right).
\end{multline}
As the system is Hamiltonian, the total energy of the system should be
conserved by the numerical scheme. The Fermi--Pasta--Ulam system has yet
another quasi-invariant. Letting $I_j(x_{m+j},y_{m+j})
= \frac{1}{2} \left( y^2_{m+j} + \omega^2 x^2_{m+j} \right)$ be the
oscillatory energy of the $j$th stiff spring, the total oscillatory energy $I =
I_1 + I_2 + \dots + I_m$ is close to a constant value as proved
in \cite[p.22]{hairer2006geometric}: 
\[ I(\bm{x}(t),\bm{y}(t)) =
I(\bm{x}(0),\bm{y}(0)) + \mathcal{O}(\omega^{-1}).\]

In our numerical experiment, we set $m=3$ and $\omega =
50$. Figure \ref{energy variation fpu} (left panel) shows the variation of the
oscillating energies and of the pesudo-energy $\tilde{H}^n$ over
time for a constant time-step $h=10^{-3}$. 
The energy exchange between the oscillatory modes
is remarkably similar to the reference solution given
in \cite[Chap. I.4]{hairer2006geometric} and represented in
Figure \ref{fpu reference} (right panel). The reference solution was computed with high accuracy using a Runge--Kutta 4 integrator with a time-step of
$h = 10^{-4}$.
In particular, the total
oscillatory energy $I$ displays fast oscillations around a fixed
constant. The conservation of energy is verified up to machine
precision, even with a mid-point quadrature. The results being already very satisfactory, the results computed with higher order quadratures are omitted for brevity. A more detailed study of the influence of the order of quadrature on pseudo-energy conservation is presented in the next section.

\begin{figure}[!htp]
\begin{center}
\subfloat[]{\label{energy variation fpu}
\resizebox{0.5\textwidth}{!}{\input{fermi}}
}
\subfloat[]{\label{fpu reference}
\resizebox{0.5\textwidth}{!}{\input{FPU}}
}
\caption{Fermi--Pasta--Ulam test case: \protect\subref{energy
variation fpu} Energy variation, present scheme, $h=10^{-3}$
; \protect\subref{fpu reference} reference RK4 solution, $h=10^{-4}$}
\end{center}
\end{figure}

\subsection{Nonlinear wave equation}
\label{sec:nonlinear wave equation}

The setting comes from~\cite{chabassier2010energy}. The interval
$\Omega = [0,1]$ represents a one-dimensional string. Let
$V:\R^d\to\R$ be the potential energy, with dimension
$d=2$. It is assumed that $V$ verifies the following
conditions:
\begin{itemize}
\item
Smoothness: $V$ is of class $C^2$;
\item
Convexity: $V$ is strictly convex;
\item
Coercivity: $\exists K>0$ so that $V(u) \geq K \vert u \vert^2$ for all $u\in \R^2$;
\item Boundedness:
$\exists M>0$ so that $\vert \nabla V(u) \vert^2 \leq M \min(V(u),(1+ \vert u \vert^2))$ for all $u\in \R^2$.
\end{itemize}
The problem of interest is to find $u: \Omega \times \mathbb{R}^+ \rightarrow \mathbb{R}^2$ such that 
\begin{equation}
\left\{
\begin{aligned}
&\partial^2_{tt} u - \partial_x( \nabla V(\partial_x u)) = 0, \\
&u(0,t) = 0, \quad u(1,t) = 0, \\
&u(x,0) = u^0(x),\quad\partial_t u(x,0) = v^0(x),
\end{aligned}
\right.
\end{equation}
with given initial conditions $u^0 : \Omega \rightarrow \R^2$ and $v^0 : \Omega \rightarrow \R^2$.
For a pair $(u_1,u_2) \in \mathbb{R}^2$, the 
functional $V$ takes the following value: 
\[ V(u_1,u_2)
= \frac{u_1^2+u_2^2}{2} - \alpha \left( \sqrt{(1+u_1)^2 + u_2^2} -
(1+u_1) \right),
\] 
where the parameter $\alpha \in [0,1)$ is related to the
tension of the string, such that the string behaviour is nonlinear when
$\alpha>0$ and the strength of the nonlinearity increases with $\alpha$.
The following variational formulation in $\mathcal{V} := H^1_0(\Omega;\R^2)$ is considered: 
\[\frac{d^2}{d t^2} \left( \int_{\Omega} \bm{u} \cdot \bm{v} \right) + \int_{\Omega} \nabla V( \partial_x \bm{u}) \cdot \partial_x \bm{v} = 0, \quad \forall \bm{v} \in \mathcal{V}, \; \forall t >0. 
\]

We use $H^1$-conforming $\mathbb{P}_1$ Lagrange finite elements
for the space discretization. 
Let $N$ be the number of nodes discretizing the string and
$\left(\varphi_i\right)_{1\le i\le 2N}$ be the nodal basis functions associated 
with the degrees of freedom of the string in the two directions.
These basis functions span the finite-dimensional subspace $\mathcal{V}_N \subsetneq \mathcal{V}$. 
The space semi-discrete function approximating the exact solution is
$\bm{u}_N(t) = \sum_{i=1}^{2N} \bm{q}_i(t) \varphi_i(x) \in \mathcal{V}_N$ and solves the following space semi-discrete problem:
\[\frac{d^2}{d t^2} \left( \int_{\Omega} \bm{u}_N \cdot \bm{v}_N \right) + \int_{\Omega} \nabla V( \partial_x \bm{u}_N) \cdot \partial_x \bm{v}_N = 0, \quad \forall \bm{v}_N \in \mathcal{V}_N, \;\forall t >0. 
\]
Introducing the vector $\bm{q}=(q_1,\ldots,q_{2N}) \in \R^{2N}$,
the following Hamiltonian system has to be integrated in time:
\[H(\bm{q},\bm{p}) = \frac{1}{2}\bm{p}^{\mathrm{T}} \bm{M}^{-1}\bm{p} + V(\bm{q}), \qquad
V(\bm{q}) = \int_{\Omega} V\left(\sum_{i=1}^{2N} \bm{q}_i \partial_x \varphi_i \right), \]
where $\bm{M}$ is the classical $\mathbb{P}_1$ Lagrange finite element mass matrix. Assuming that all the components of $\bm{q}$ associated with the first direction are enumerated first and then those associated with the second direction, the matrix $\bm{M}$ is a $2\times 2$ block-diagonal matrix and each diagonal block is a tridiagonal matrix of size $N\times N$ equal to $\Delta x \mathop{\mathrm{tridiag}}(1/6,2/3,1/6)$.

In our numerical experiments, we consider the values
$\alpha = 0$ (which corresponds to the linear case), $\alpha = 0.8$
(which corresponds to a mildly nonlinear behavior), and $\alpha = 0.99$ (which
corresponds to a strongly nonlinear behavior). The space discretisation is such that $\Delta x = 0.01$ and thus $N=99$ basis functions are used in each direction. The time-step is $\Delta t = 0.0033$. Using the same space discretisation, the greatest stable constant time-step has been found to be $\Delta t_{\text{max}} = 0.0055$.
Three numerical simulations are performed in every case 
by letting the amplitude of the 
initial condition $u^0$
at time $t^0$ be $0.01$, $0.1$, or $0.3$. The initial velocity
at time $t^0$ is always taken to be zero. 
The results are reported in Figure \ref{defs} where in all cases,
a mid-point quadrature is used. Six snapshots of the $\R^2$-valued deformation vector $u_0(x) + u(x,t)$ of the string over one second are represented horizontally in various colors; specifically, at each snapshot in time, the deformation vector is plotted in the corresponding vertical plane.
The role played by the
nonlinearity can be observed in the fact that the amplitude of $u^0$
influences the vibration of the string. The tension which causes
nonlinearity also changes the wave celerity. We observe an excellent
agreement between the present results and the results reported
in~\cite{chabassier2010energy}. 

\begin{figure}[htp!]
\centering
\subfloat{
\includegraphics[width=0.33\textwidth]{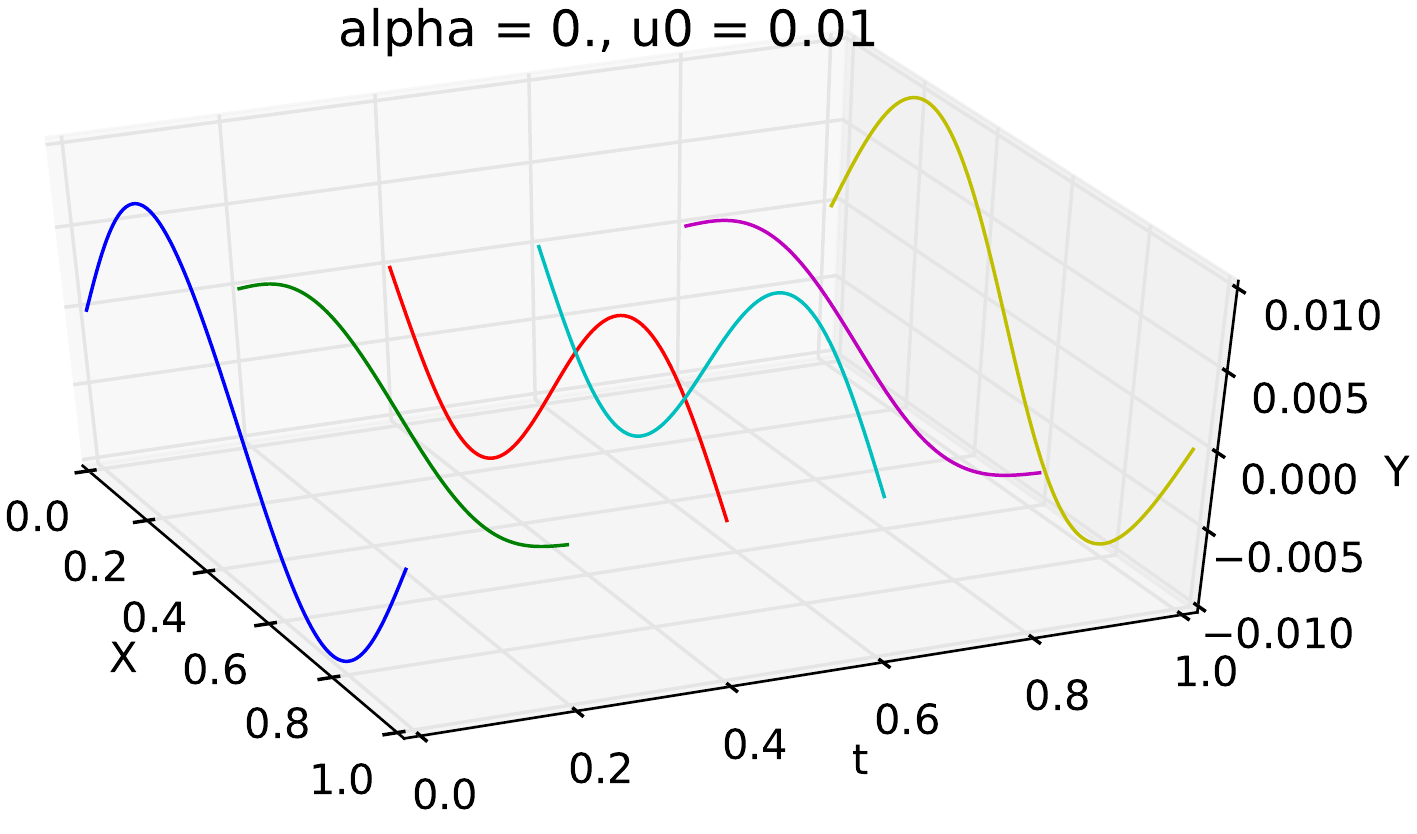}
}
\subfloat{
\includegraphics[width=0.33\textwidth]{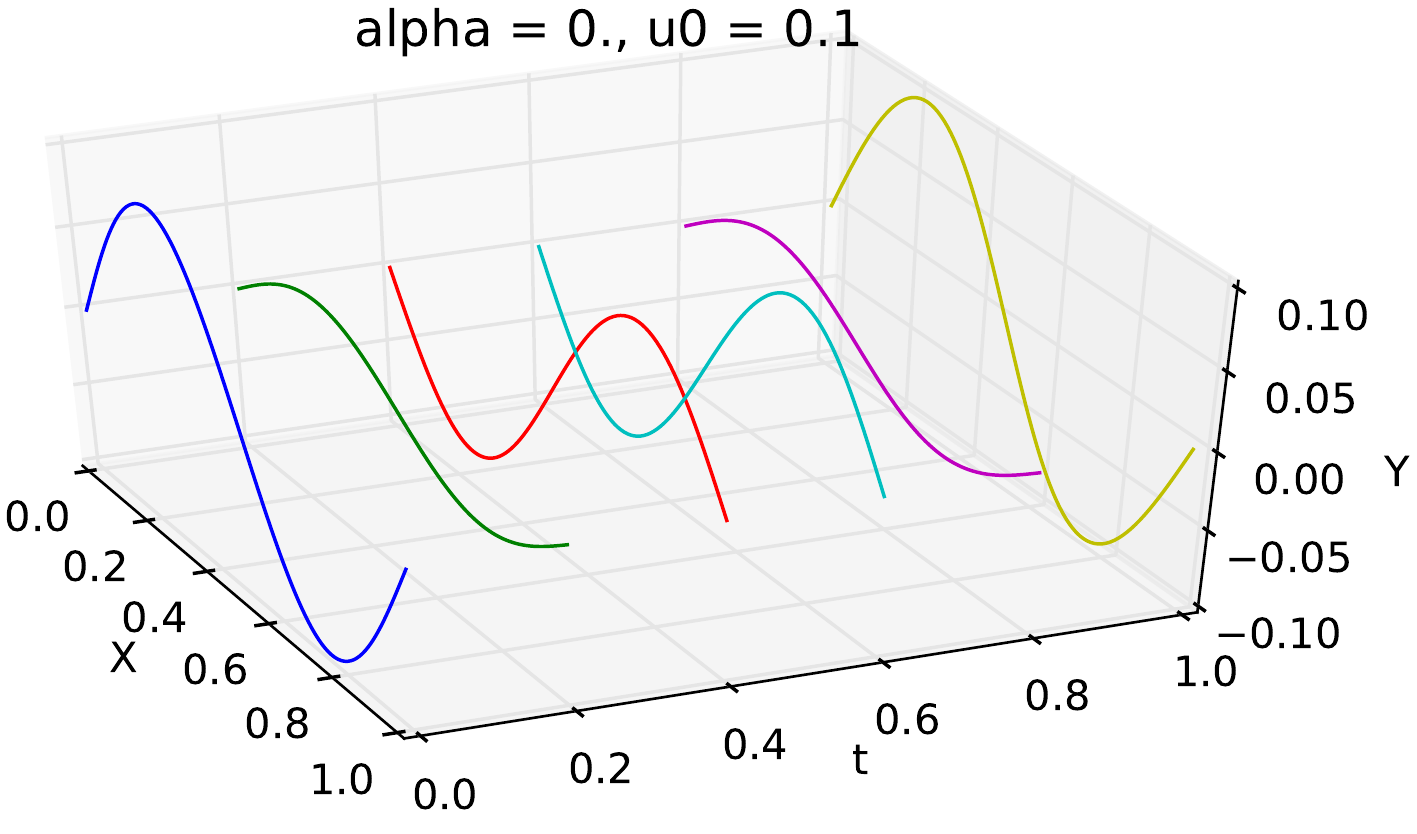}
}
\subfloat{
\includegraphics[width=0.33\textwidth]{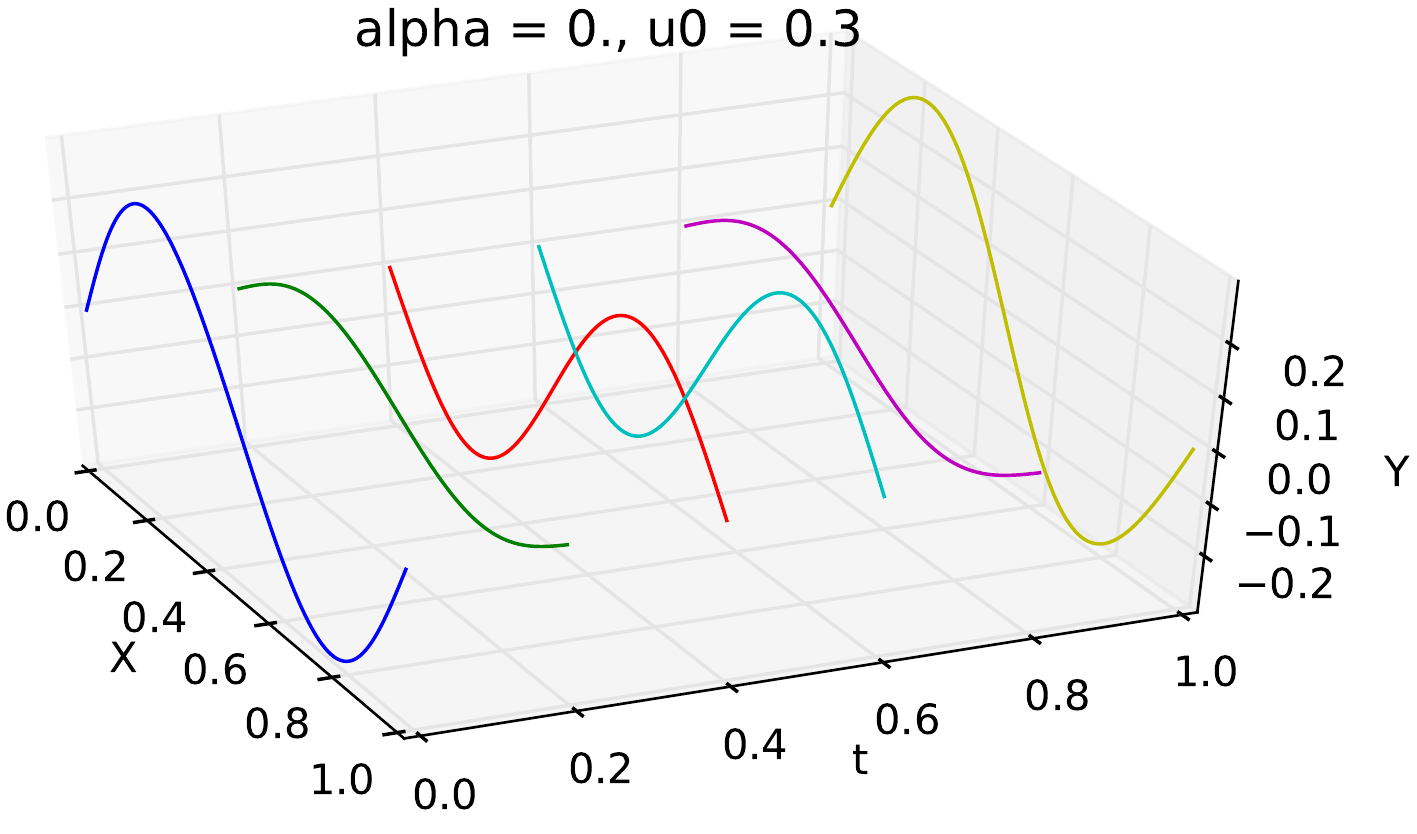}
}\\
\subfloat{
\includegraphics[width=0.33\textwidth]{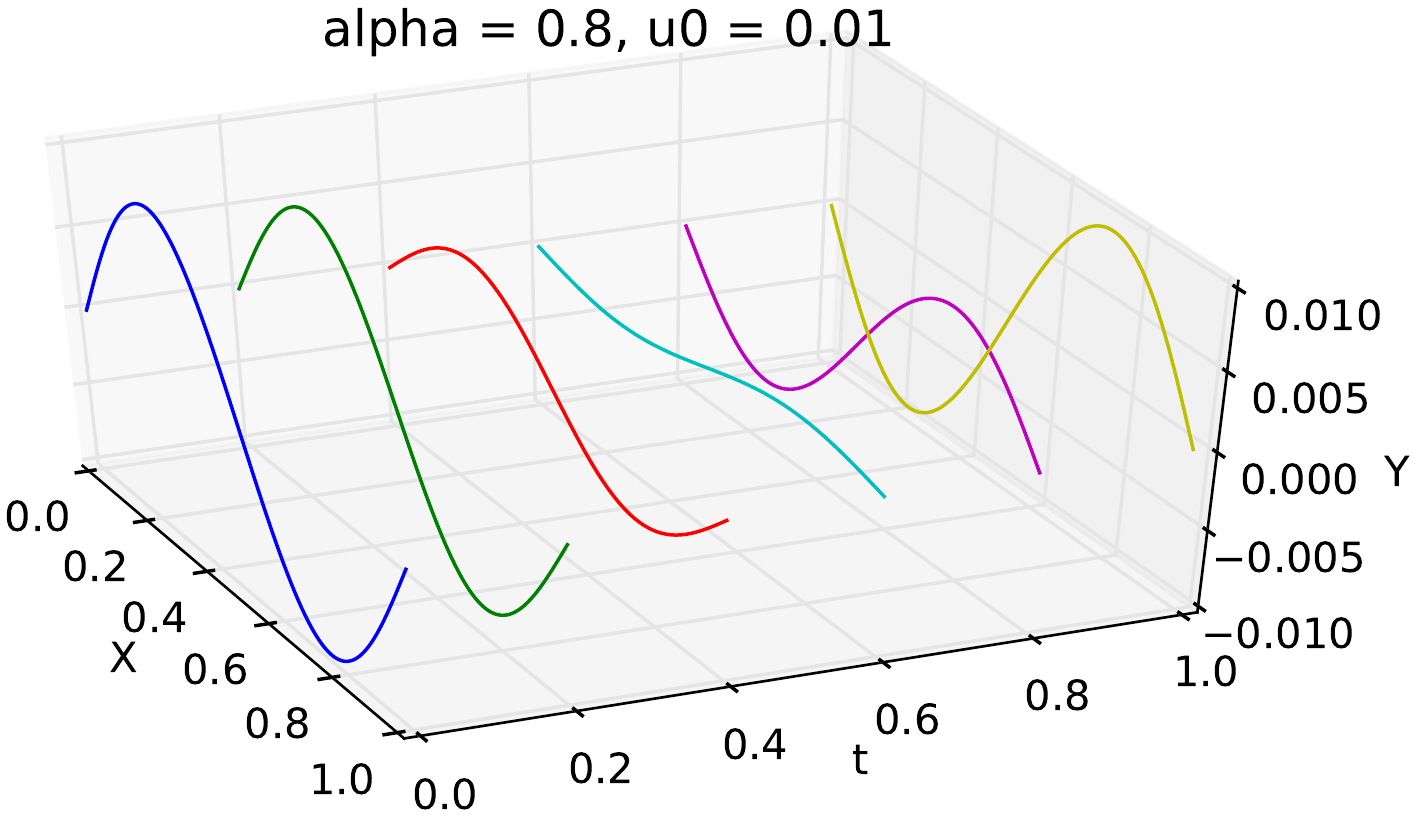}
}
\subfloat{
\includegraphics[width=0.33\textwidth]{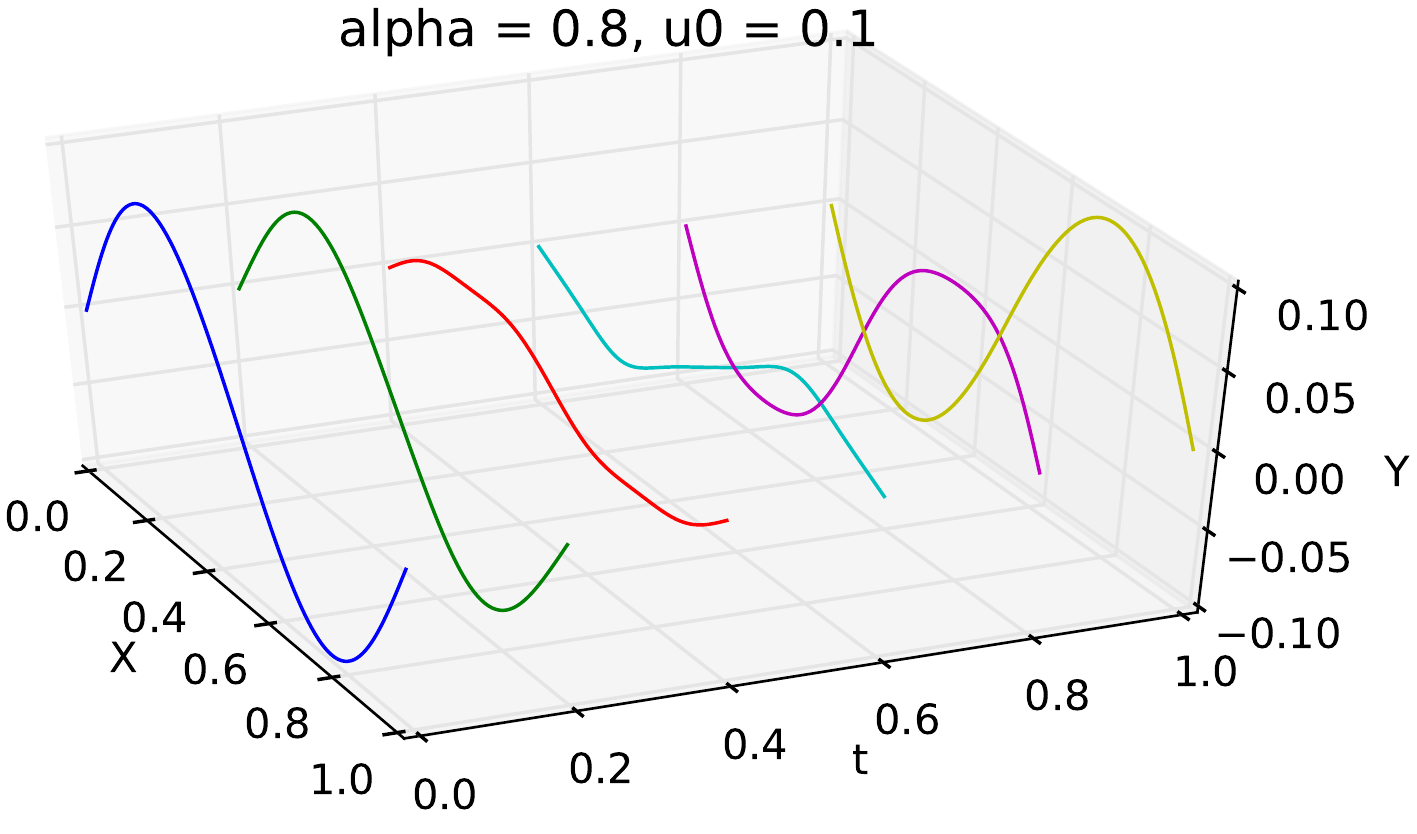}
}
\subfloat{
\includegraphics[width=0.33\textwidth]{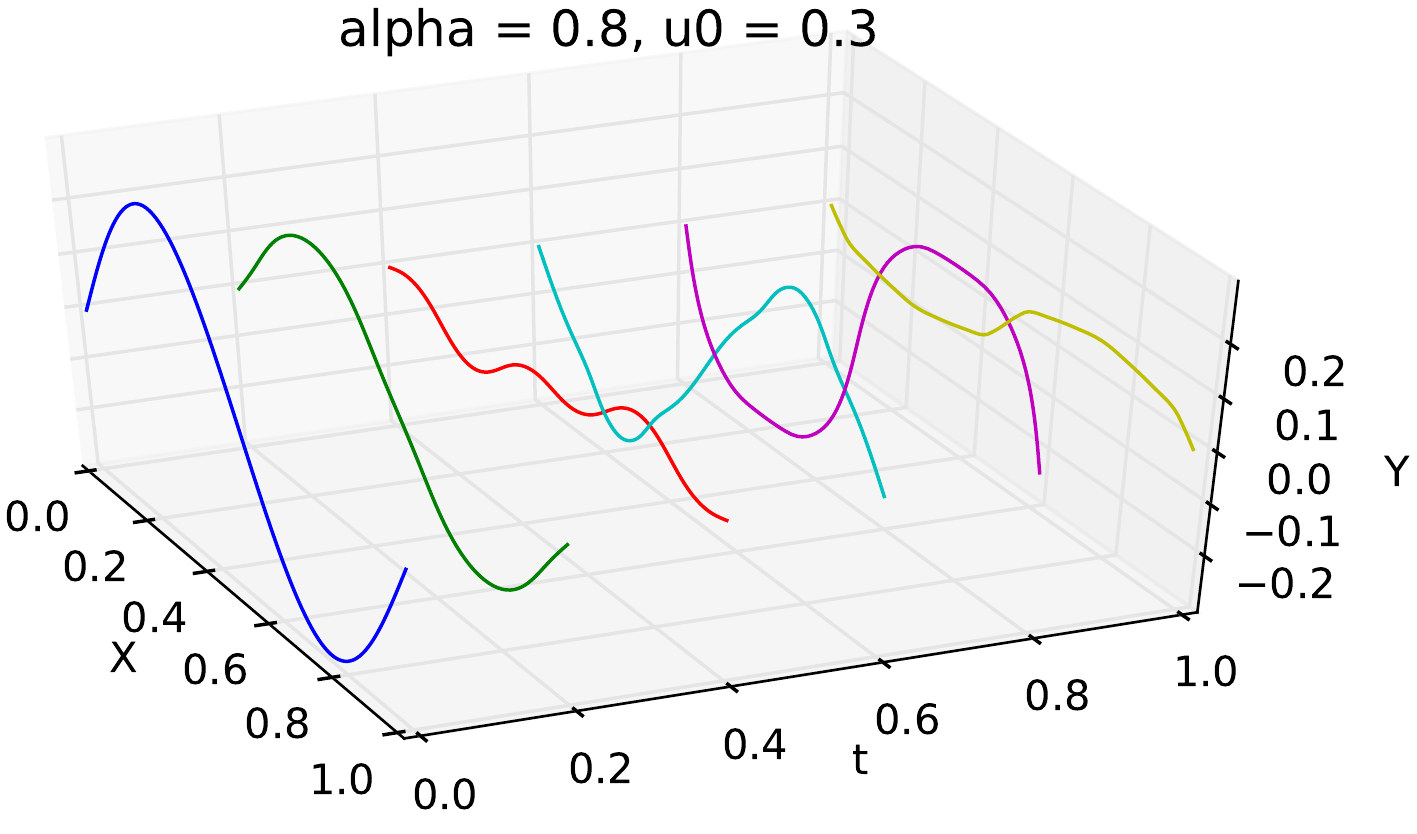}
}\\
\subfloat{
\includegraphics[width=0.33\textwidth]{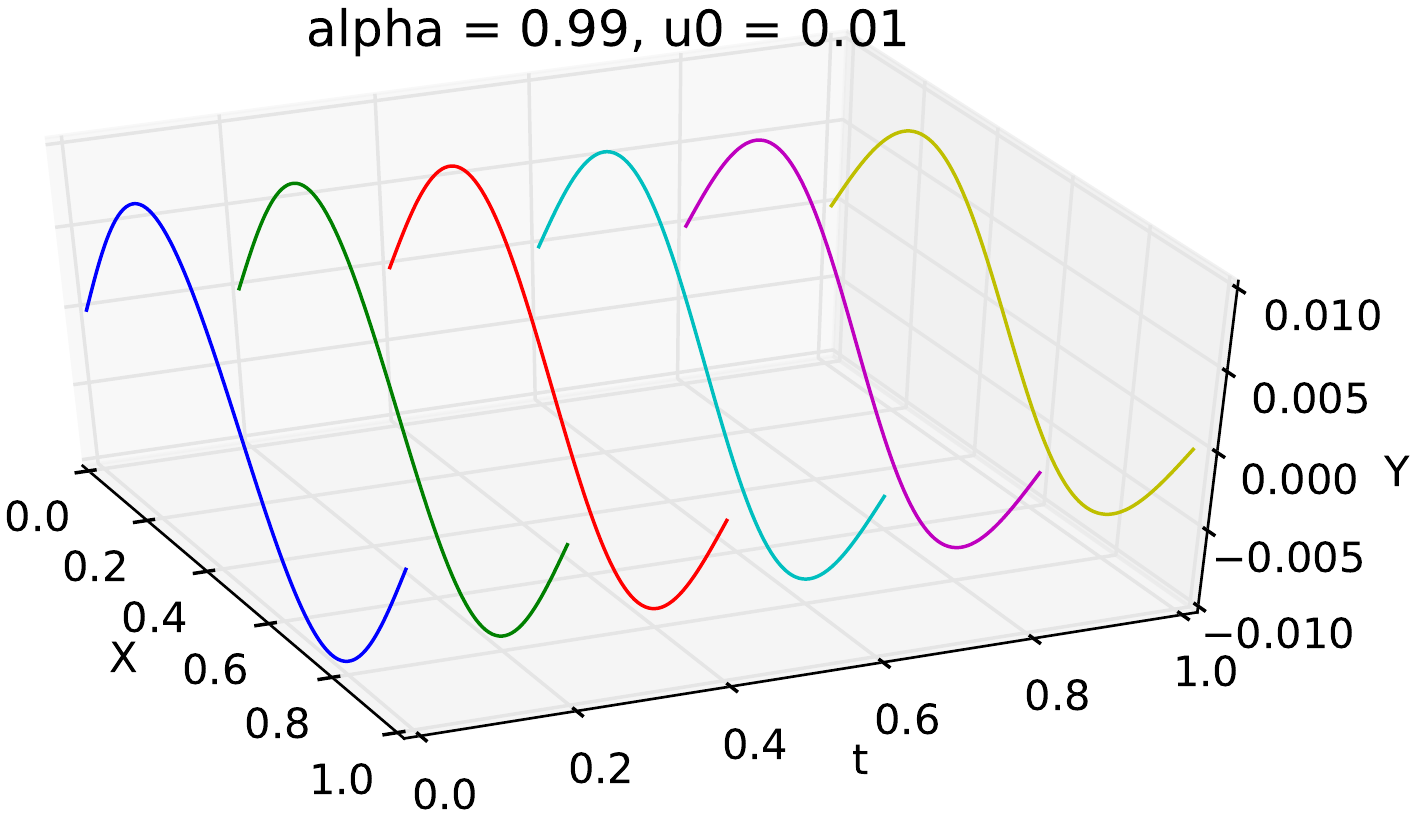}
}
\subfloat{
\includegraphics[width=0.33\textwidth]{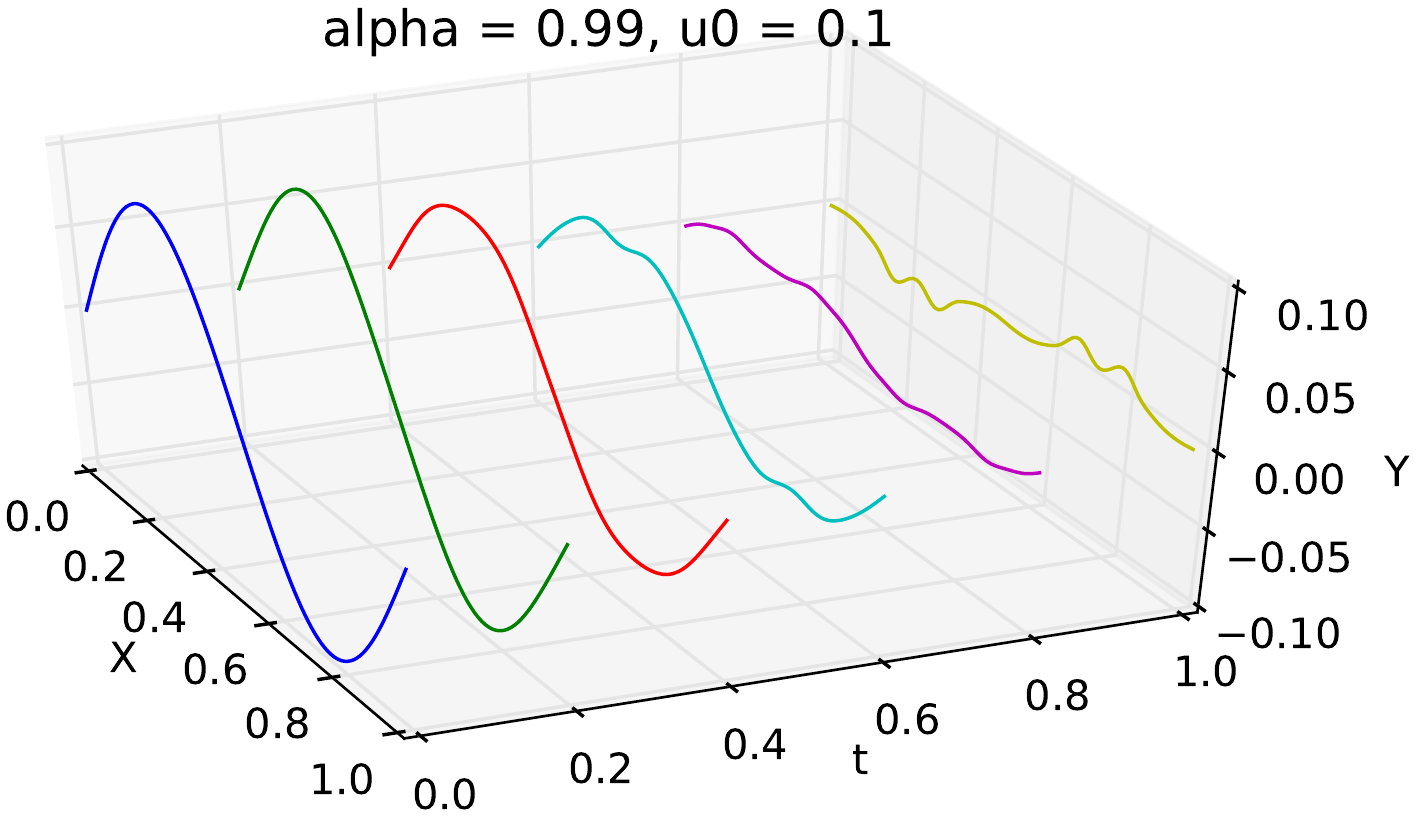}
}
\subfloat{
\includegraphics[width=0.33\textwidth]{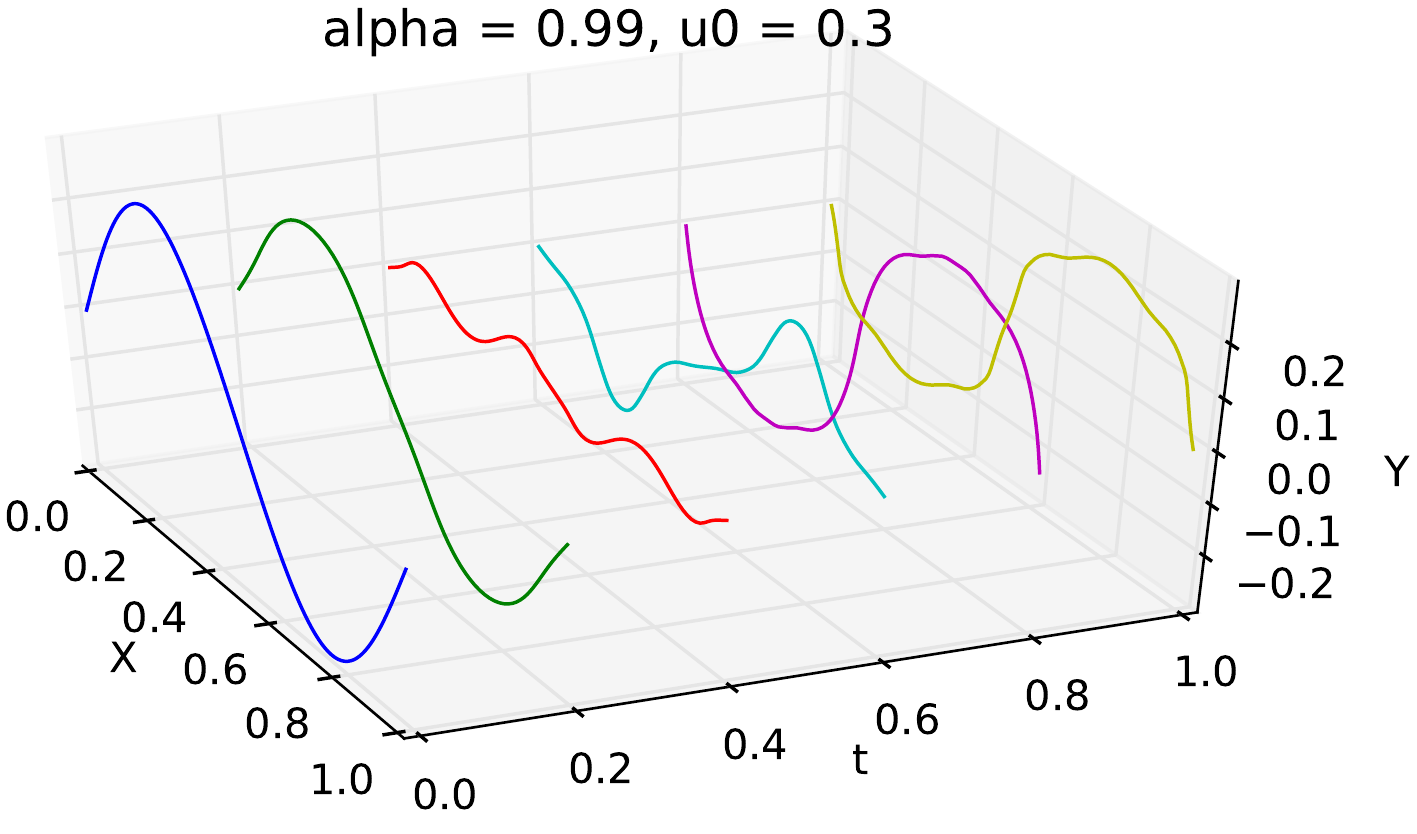}
}

\caption{Nonlinear wave equation: Deformations of the string over time
with nonlinearity parameter $\alpha=0$ (top), $\alpha=0.8$ (middle), and $\alpha=0.99$ (bottom); the amplitude of $u^0$ is $u_0=0.01$ (left), $u_0 = 0.1$ (middle), and $u_0=0.3$ (right)}
\label{defs}
\end{figure}

The time-variation of the discrete pseudo-energy $\tilde{H}^n$ defined by (\ref{discrete energy}) and
of the discrete energy $H^n$ defined by (\ref{eq:discrete energy bis})
are shown in Figure~\ref{energy variation modif et normale}
in the most challenging case where
$\alpha = 0.99$ and $u^0=0.3$. 
We first observe that the variations the discrete energy
$H^n$ are very moderate. In Figure \ref{energy variation modif et normale on fly}, we illustrate the adaptive time-stepping strategy discussed in Remark~\ref{rq:a posteriori}, where we take $\epsilon_{\textrm{fly}}=0.03\%$ to control the departure of $\tilde{H}^n$ from $H^n$ at each iteration. 
In this situation, the adaptive time steps take values in the range 
$[0.0024,0.005]$. Concerning the discrete
pseudo-energy $\tilde{H}^n$, we observe 
conservation up to machine precision when employing a five-point 
Gauss--Legendre quadrature of order 9.



\begin{figure}[!htp]
\begin{center}
\subfloat[]{\label{energy variation modif et normale}
\resizebox{0.5\textwidth}{!}{\includegraphics[scale=1]{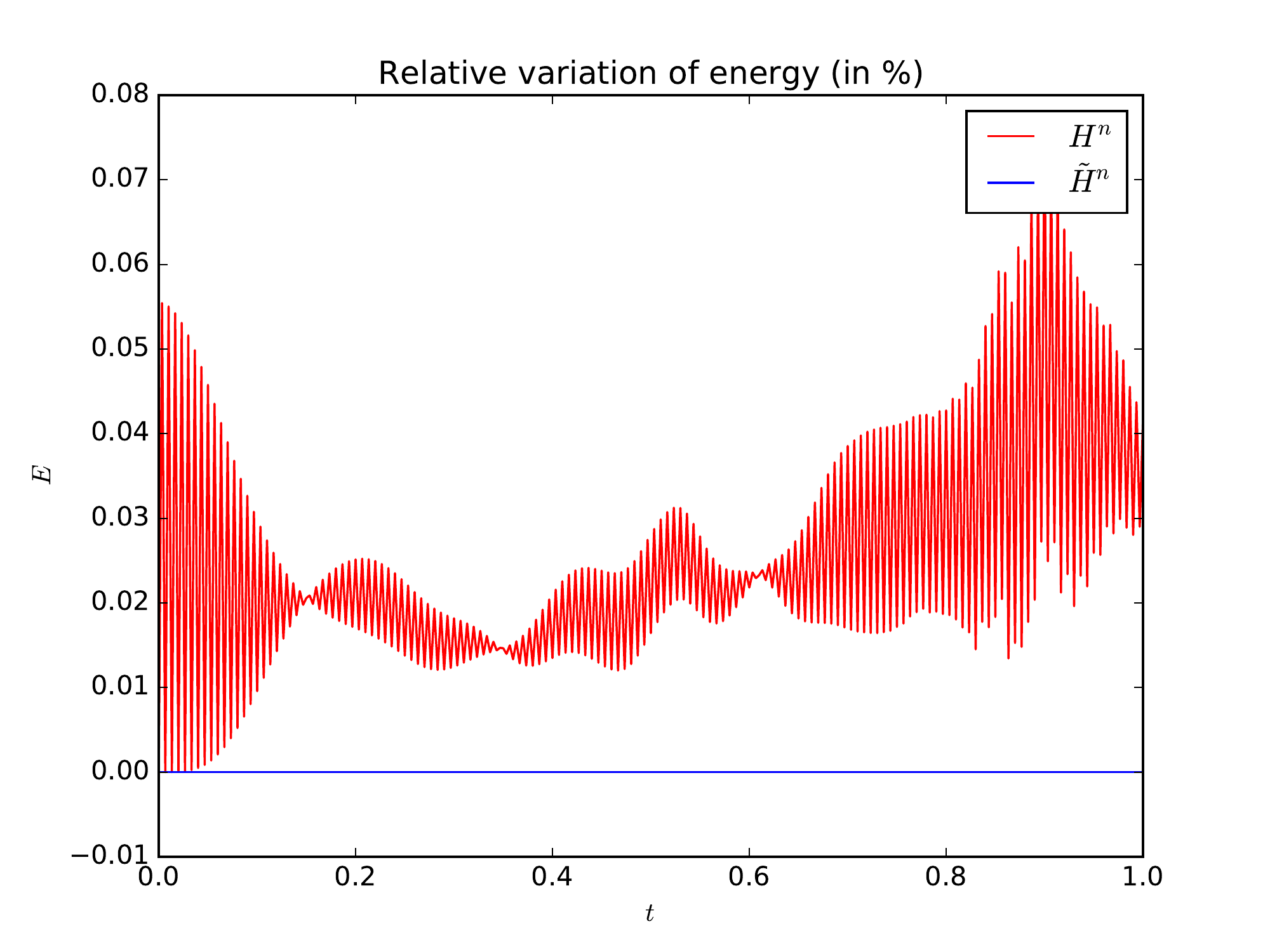}}
}
\subfloat[]{\label{energy variation modif et normale on fly}
\resizebox{0.5\textwidth}{!}{\includegraphics[scale=1]{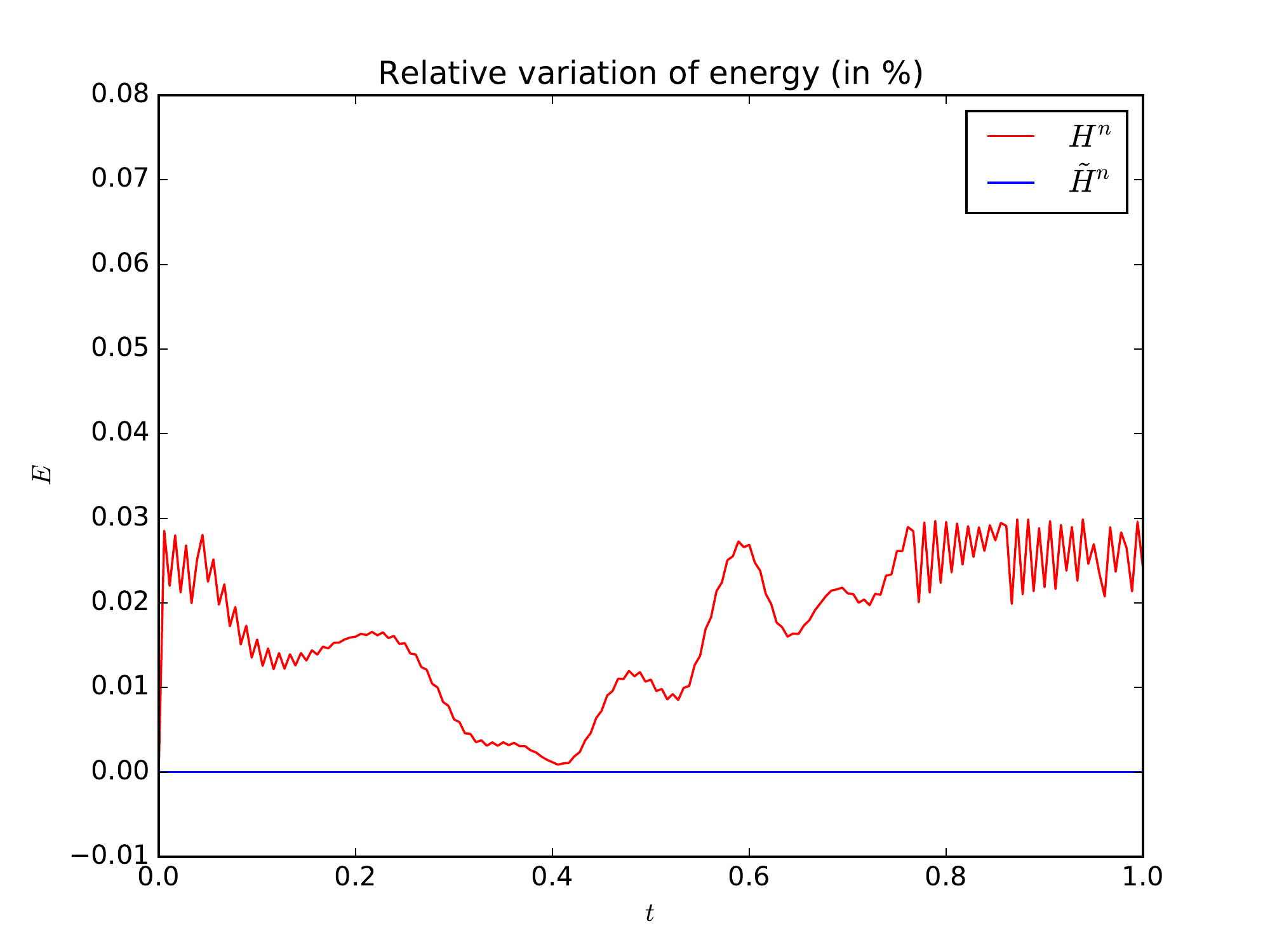}}
}
\caption{Nonlinear wave equation: time-variation of the discrete energy $H^n$ and pseudo-energy $\tilde{H}^n$ over a unit time interval for $\alpha = 0.99$ and an amplitude of $0.3$ for $u^0$: \protect\subref{energy variation modif et normale} Fixed time-step $\Delta t=0.0033$ ; \protect\subref{energy variation modif et normale on fly} Adaptive time-step as in Remark~\ref{rq:a posteriori}}
\end{center}
\end{figure}


\begin{table}[!htp]
\begin{center}
   \begin{tabular}{ | l | c | c | c | c | c | c | c | c | c |}
     \hline

     $\alpha$ & \multicolumn{3}{|c|}{ $0$} &
     \multicolumn{3}{|c|}{$0.8$} &
     \multicolumn{3}{|c|}{$0.99$} \\ \hline
     $u^0$ & $0.01$ &  $0.1$ & $0.3$ & $0.01$ &  $0.1$ & $0.3$ & $0.01$ &  $0.1$ & $0.3$\\ \hline
     MP & me & me & me & 5.5e-08 & 5.6e-05 & 2.4e-05 & 1.5e-08 & 1.2e-05 & 1.1e-04
\\ \hline
     GL5 & me & me & me & 1.7e-13 & 1.2e-14 & 4.4e-14 & 3.3e-12 & 6.4e-13 & 1.8e-13
 \\ \hline
     GL9 & me & me & me & 1.6e-13 & 1.2e-14 & me & 3.1e-12 & me & me
 \\
   	 \hline
   \end{tabular}
   \caption{Nonlinear wave equation: Maximal errors in the conservation of the pseudo-energy $\tilde{H}^n$ for the mid-point (MP), three-point Gauss--Legendre of order 5 (GL5) and five-point Gauss--Legendre of order 9 (GL9) quadratures; `me' means machine error}
   \label{tab:energy errors}
\end{center}
\end{table}
\begin{figure}[!htp]
\begin{center}
\input{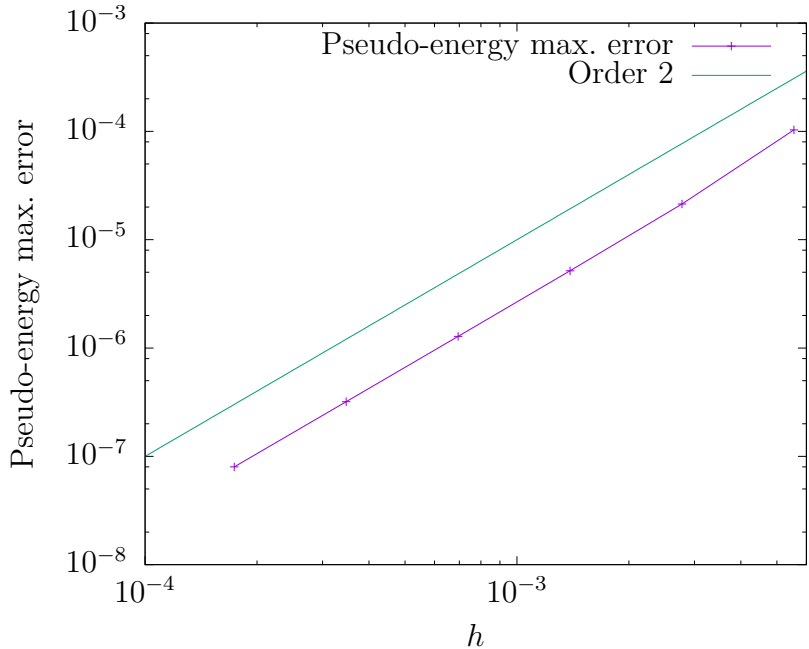}
\caption{Nonlinear wave equation: Maximal error on the conservation of the pseudo-energy $\tilde H^n$ as a function of the time-step in the case $\alpha = 0.99$, $u^0 = 0.3$, and a mid-point quadrature}
\label{fig:error_conv_energy}
\end{center}
\end{figure}

To illustrate
the impact of quadratures on pseudo-energy conservation, we perform 
two numerical experiments. First, Table \ref{tab:energy errors} reports the maximal variation of the discrete pseudo-energy $\tilde{H}^n$ depending on the quadrature used with a constant time-step $\Delta t = 0.0033$. The mid-point quadrature is precise enough when used on the linear equation ($\alpha = 0$). The three-point Gauss--Legendre quadrature of order 5 is found to give very satisfactory results for the two nonlinear cases ($\alpha=0.8$ and $\alpha=0.99$).
As announced in Remark \ref{rq:quadratures}, the maximal error on pseudo-energy conservation is observed to decrease when increasing the quadrature order. The total number of force evaluations is 300, 900, and 1500 when using the mid-point quadrature and the three- and five-point Gauss--Legendre quadratures of order 5 and 9, respectively.
In the second experiment, we illustrate the second-order accuracy of 
pseudo-energy conservation when using the mid-point quadrature. 
We consider again the most challenging case where $\alpha = 0.99$ and $u^0 = 0.3$.
Figure \ref{fig:error_conv_energy} shows the maximal variation of the discrete pseudo-energy $\tilde{H}^n$ with respect to the value of the (fixed) time-step used in the simulation, confirming the second-order accuracy.

To conclude this section, we present some comments on the relative costs of the present scheme with respect to an implicit scheme, e.g., the one devised in~\cite{chabassier2010energy}. In the most challenging case where $\alpha = 0.99$ and $u^0 = 0.3$, the energy conservation in \cite{chabassier2010energy} is machine error. For a comparable error, we can consider the present scheme with the five-point Gauss--Legendre quadrature of order 9. The convergence criterion of the Newton's method in the implicit scheme can be estimated to require at least a couple of iterations per time-step and the same number of Hessian computations and global matrix inversions per time-step, whereas the explicit method with the five-point Gauss--Legendre quadrature of order 9 requires only 5 force evaluations per time-step. Thus, although no general conclusions can be drawn, the present explicit method stands good chances to be quite competitive with respect to an implicit method.

\section{Asynchronous scheme}\label{sec:asynchronous}
Owing to the CFL condition (\ref{eq:abs_stab}), the time-step can be
required to be small in regions with stiff or nonsmooth dynamics. The
overall efficiency of the computation would be compromised by the
large number of integral calculations in the whole domain, while most
of these would be redundant in smooth regions. We therefore propose an
asynchronous version of the scheme which preserves the general
properties of the synchronous version. In this section, 
we first present the idea behind slow-fast decomposition of the particles
and we devise an asynchronous scheme for which we prove pseudo-energy conservation at the slow time nodes under exact force integration. Second-order
accuracy is expected and is illustrated numerically on two test cases
including an inhomogeneous wave equation.

\subsection{Slow-fast splitting}

In order to simplify the presentation of the asynchronous scheme, we
limit ourselves here to the integration of a slow-fast dynamics, i.e.,
we consider a system with essentially two distinct time scales. The
forces between the particles are supposed to be split into a "fast"
set with an associated time-step $h_F$ and a "slow" set with an
associated time-step $h_S > h_F$.  For example, the
splitting can result from the relative stiffness of the forces in
the system. Consequently, the particles are split into three sets: the slow particles are subjected only to slow forces, the fast particles are subjected only to fast forces, and the remaining particles, which are called mixed particles, are subjected to both slow and fast forces.
This definition means that the slow particles do not interact with the fast particles directly, so that the potential $V$ can be
decomposed as follows:
\[V(\bm{q}) = V_S(\bm{q}_S) + V_M(\bm{q}_M,\bm{q}_S) + V_F(\bm{q}_F,\bm{q}_M),\]
where $\bm{q}_F$, $\bm{q}_S$, and $\bm{q}_M$ denote
respectively the positions of the fast, slow, and mixed
particles, the potential $V_S$ describes the interactions between slow particles, $V_M$ the interactions between slow and mixed particles, and $V_F$ the interactions between mixed and fast particles (or between themselves). 
For instance, the purple particle in
Figure \ref{fig:rapides_lentes} and the Particle 3 in
Figure \ref{fig:asynchronous} are mixed particles. The mixed particle in Figure \ref{fig:rapides_lentes} is subjected to a "fast" force by the stiff spring on its right and a "slow" force by the soft spring on its left. The fast particle (in red) is only subjected to a "fast" force by the stiff spring. The slow particles (in blue) are only subjected to "slow" forces by the two soft springs. Finding a slow-fast decomposition is not possible for every Hamiltonian system. For example, in the case where all the particles interact with each other, no slow-fast splitting is available. The most favorable configuration is the one where the slow and the fast particles interact essentially among themselves and have very few interactions with mixed particles. This configuration is encountered in inhomogeneous problems where an interface separates two zones where the properties are different; the slow and the fast particles are then located in the two zones, whereas the mixed particles are located at the interface.

In what follows, 
we abuse the notation by denoting 
$F$, $M$ and $S$ the sets collecting the indices in 
$\{1,\ldots,N\}$ of the fast, mixed
and slow particles, respectively. For simplicity, we assume
that the mass matrix $\bm{M}$ is diagonal and denote
$\bm{M}_F$, $\bm{M}_M$ and $\bm{M}_S$ the restriction of
$\bm{M}$ to the $F$, $M$ and $S$ particles respectively.
Still for simplicity, we assume that
both time-steps $h_S$ and $h_F$ are kept constant. 

\begin{figure}[!htp]
\begin{center}
\begin{tikzpicture}[x=2cm,y=1cm]
\def\coilfineup#1{
        {#1 +  (1.5-0.4)/7*\t - 0.2*(cos(\t * pi r)-1)},
        {0.4 * sin(\t * pi r)}
        }
\def\coilfinedown#1{
        {#1 +  (1.5-0.4)/7*\t - 0.2*(cos(\t * pi r)-1)},
        {-0.4 * sin(\t * pi r)}
        }
\def\coilthickup#1{
        {#1 +  (1.5-0.2)/15*\t - 0.1*(cos(\t * pi r)-1)},
        {0.4 * sin(\t * pi r)}
        }
\def\coilthickdown#1{
        {#1 +  (1.5-0.2)/15*\t - 0.1*(cos(\t * pi r)-1)},
        {-0.4 * sin(\t * pi r)}
        }
\draw[domain={0:7},smooth,variable=\t,samples=100] plot (\coilfineup{0});
\draw[domain={0:7},smooth,variable=\t,samples=100] plot (\coilfinedown{1.5});
\draw[domain={0:15},smooth,variable=\t,samples=100, thick] plot (\coilthickup{3});
\draw[fill, blue] (0,0) circle [radius=3pt];
\draw[fill, blue] (1.5,0) circle [radius=3pt];
\draw[fill, blue!50!red] (3,0) circle [radius=3pt];
\draw[fill, red] (4.5,0) circle [radius=3pt];
\draw (0,0.4) node[anchor=south]{slow};
\draw (1.5,0.4) node[anchor=south]{slow};
\draw (3,0.4) node[anchor=south]{mixed};
\draw (4.5,0.4) node[anchor=south]{fast};
\draw (0.75,-0.4) node[anchor=north]{\begin{minipage}{2cm}\centering
        soft \end{minipage}};
\draw (2.25,-0.4) node[anchor=north]{\begin{minipage}{2cm}\centering
        soft \end{minipage}};
\draw (3.75,-0.4) node[anchor=north]{\begin{minipage}{2cm}\centering
        stiff \end{minipage}};
\end{tikzpicture}
\end{center}
\caption{Example of system of particles with a slow-fast splitting}
\label{fig:rapides_lentes}
\end{figure}

\subsection{Presentation of the asynchronous scheme}

Without much loss of generality, we can suppose that the slow and fast
time-steps are commensurate so that $h_S = K h_F$ with
$K\in\mathbb{N}^*$. We then define the coarse
time nodes $t^n=nh_S$ and the fine time nodes $t^{n,m}=t^n+mh_F$ for all
$m\in\{0,\ldots,K\}$. 
The asynchronous scheme consists in integrating $K$ times the
dynamics of the $F$ and $M$ particles with the "fast" forces computed
at each time-step of length $h_F$ and in updating the $S$ particles 
with the "slow''
forces computed once at the end of each time-step of length $h_S$. The
general procedure is depicted in Figure \ref{fig:asynchronous} for
four particles in the same configuration as in
Figure \ref{fig:rapides_lentes}. The efficiency of the asynchronous
scheme hinges on the fact that each particle has a free-flight
movement during each time-step, with the neighbouring particle forces
acting only at the end of the time-step. 

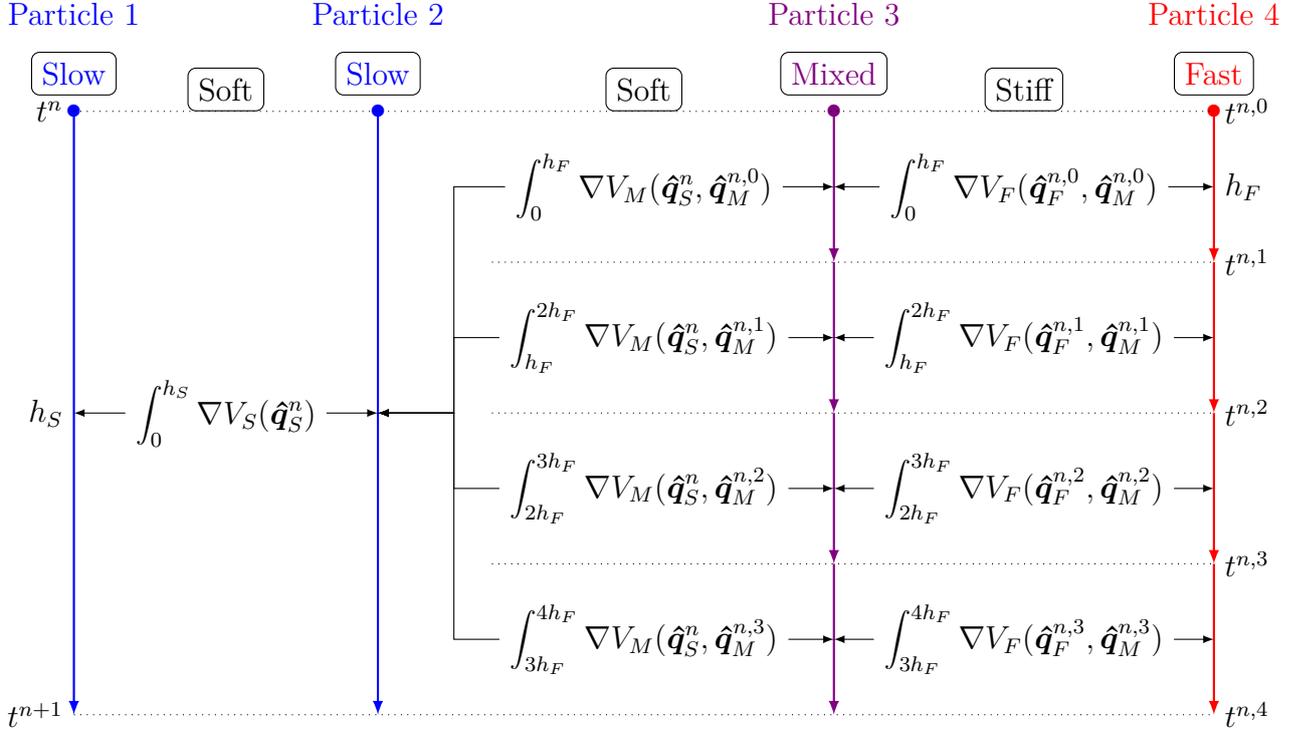
\begin{figure}[!htp]
\centering
\begin{tikzpicture}[x=5cm,y=2cm]
\draw (-0.6,0) node[anchor=south,rounded corners=3pt,draw=black]{Soft};
\draw (0.5,0) node[anchor=south,rounded corners=3pt,draw=black]{Soft};
\draw (1.5,0) node[anchor=south,rounded
corners=3pt,draw=black]{Stiff};
\begin{scope}[blue]
\draw (-1,0.5) node[anchor=south]{Particle 1};
\draw (-1,0.25) node[rounded corners=3pt,draw=black]{Slow};
\draw (-1,0) node{$\bullet$};
\draw[thick,-latex] (-1,0) -- (-1,-4);
\draw (-0.2,0.5) node[anchor=south]{Particle 2};
\draw (-0.2,0.25) node[rounded corners=3pt,draw=black]{Slow};
\draw (-0.2,0) node{$\bullet$};
\draw[thick,-latex] (-0.2,0) -- (-0.2,-4);
\end{scope}
\begin{scope}[blue!50!red]
\draw (1,0.5) node[anchor=south]{Particle 3};
\draw (1,0.25) node[rounded corners=3pt,draw=black]{Mixed};
\draw (1,0) node{$\bullet$};
\draw[thick,-latex] (1,0) -- (1,-1);
\draw[thick,-latex] (1,-1) -- (1,-2);
\draw[thick,-latex] (1,-2) -- (1,-3);
\draw[thick,-latex] (1,-3) -- (1,-4);
\end{scope}
\begin{scope}[red]
\draw (2,0.5) node[anchor=south]{Particle 4};
\draw (2,0.25) node[rounded corners=3pt,draw=black]{Fast};
\draw (2,0) node{$\bullet$};
\draw[thick,-latex] (2,0) -- (2,-1);
\draw[thick,-latex] (2,-1) -- (2,-2);
\draw[thick,-latex] (2,-2) -- (2,-3);
\draw[thick,-latex] (2,-3) -- (2,-4);
\end{scope}
\draw (-1,-2) node[anchor=east]{$h_S$};
\draw (2,-0.5) node[anchor=west]{$h_F$};
\draw (-1,0) node[anchor=east]{$t^n$};
\draw (-1,-4) node[anchor=east]{$t^{n+1}$};
\draw (2,0) node[anchor=west]{$t^{n,0}$};
\draw (2,-1) node[anchor=west]{$t^{n,1}$};
\draw (2,-2) node[anchor=west]{$t^{n,2}$};
\draw (2,-3) node[anchor=west]{$t^{n,3}$};
\draw (2,-4) node[anchor=west]{$t^{n,4}$};
\draw[dotted] (-1,0) -- (2,0);
\draw[dotted] (0.1,-1) -- (2,-1);
\draw[dotted] (0.1,-2) -- (2,-2);
\draw[dotted] (0.1,-3) -- (2,-3);
\draw[dotted] (-1,-4) -- (2,-4);
\draw[latex-latex] (-1,-2) -- (-0.2,-2);
\draw (-0.6,-2) node[rounded corners=3pt,fill=white]{$\displaystyle\int_{0}^{h_S}{\nabla V_S(\bm{\hat{q}}_{S}^n)}$};
\draw[latex-latex] (-0.2,-2) -- (0,-2) -- (0,-0.5) -- (1,-0.5);
\draw (0.5,-0.5) node[rounded corners=3pt,fill=white]{$\displaystyle\int_{0}^{h_F}{\nabla V_M(\bm{\hat{q}}^n_{S},\bm{\hat{q}}^{n,0}_M)}$};
\draw[latex-latex] (-0.2,-2) -- (0,-2) -- (0,-1.5) -- (1,-1.5);
\draw (0.5,-1.5) node[rounded corners=3pt,fill=white]{$\displaystyle\int_{h_F}^{2h_F}{\nabla V_M(\bm{\hat{q}}^n_{S},\bm{\hat{q}}^{n,1}_M)}$};
\draw[latex-latex] (-0.2,-2) -- (0,-2) -- (0,-2.5) -- (1,-2.5);
\draw (0.5,-2.5) node[rounded corners=3pt,fill=white]{$\displaystyle\int_{2h_F}^{3h_F}{\nabla V_M(\bm{\hat{q}}^n_{S},\bm{\hat{q}}^{n,2}_M)}$};
\draw[latex-latex] (-0.2,-2) -- (0,-2) -- (0,-3.5) -- (1,-3.5);
\draw (0.5,-3.5) node[rounded corners=3pt,fill=white]{$\displaystyle\int_{3h_F}^{4h_F}{\nabla V_M(\bm{\hat{q}}^n_{S},\bm{\hat{q}}^{n,3}_M)}$};
\draw[latex-latex] (1,-0.5) -- (2,-0.5);
\draw (1.5,-0.5) node[rounded corners=3pt,fill=white]{$\displaystyle\int_{0}^{h_F}{\nabla V_F(\bm{\hat{q}}^{n,0}_{F},\bm{\hat{q}}^{n,0}_M)}$};
\draw[latex-latex] (1,-1.5) -- (2,-1.5);
\draw (1.5,-1.5) node[rounded corners=3pt,fill=white]{$\displaystyle\int_{h_F}^{2h_F}{\nabla V_F(\bm{\hat{q}}^{n,1}_{F},\bm{\hat{q}}^{n,1}_M)}$};
\draw[latex-latex] (1,-2.5) -- (2,-2.5);
\draw (1.5,-2.5) node[rounded corners=3pt,fill=white]{$\displaystyle\int_{2h_F}^{3h_F}{\nabla V_F(\bm{\hat{q}}^{n,2}_{F},\bm{\hat{q}}^{n,2}_M)}$};
\draw[latex-latex] (1,-3.5) -- (2,-3.5);
\draw (1.5,-3.5) node[rounded corners=3pt,fill=white]{$\displaystyle\int_{3h_F}^{4h_F}{\nabla V_F(\bm{\hat{q}}^{n,3}_{F},\bm{\hat{q}}^{n,3}_M)}$};
\end{tikzpicture}
\caption{Asynchronous integration of four particles with a slow-fast synamics, $h_S = 4h_F$}
\label{fig:asynchronous}
\end{figure}

Let us now describe in more detail the asynchronous scheme over the coarse
time interval $I_n=[t^n,t^{n+1}]$. At the beginning, we 
have at our disposal the triple
$(p_i^{n-1/2},q_i^n,p_i^{n+1/2})$ for the slow particles ($i\in S$) and the 
triple $(p_i^{n,-1/2}=p_i^{n-1,K-1/2},q_i^{n,0},p_i^{n,1/2})$ for the fast and the mixed particles 
($i\in F\cup M$). The asynchronous scheme then proceeds as follows (we use here the two-step formulation which reduces to \eqref{schema multistep} in the synchronous case):
\begin{itemize}
\item For the fast particles ($i\in F$), one computes for all $m\in \{0,\ldots,K-1\}$,
\begin{subequations}
\begin{align}
q_i^{n,m+1} &= q_i^{n,m} + h_F\frac{1}{m_i}p_i^{n,m+1/2}, \label{eq:fast_position}\\
p_i^{n,m+3/2} &= p_i^{n,m-1/2}
-2\int_{t^{n,m}}^{t^{n,m+1}}{ \frac{\partial V_F}{\partial q_i}(\bm{\hat{q}}_{F}^{n,m}(t),\bm{\hat{q}}_{M}^{n,m}(t)) dt},\label{eq:fast_jump}
\end{align}\label{eq:fast}
\end{subequations}
with the free-flight trajectories for the fast and the mixed particles defined as
\begin{equation}
\hat{q}_{j}^{n,m}(t) = q_j^{n,m}
+\frac{1}{m_j}p_j^{n,m+1/2}(t-t^{n,m}),\quad \forall t\in[t^{n,m},t^{n,m+1}],
\; \forall j\in F\cup M.
\end{equation}
\item For the mixed particles ($i\in M$), 
one computes for all $m\in \{0,\ldots,K-1\}$, the position $q_i^{n,m+1}$ as in \eqref{eq:fast_position}, whereas equation (\ref{eq:fast_jump}) is
replaced by
\begin{equation}
p_i^{n,m+3/2} = p_i^{n,m-1/2}
-2\int_{t^{n,m}}^{t^{n,m+1}}{ \left(\frac{\partial V_F}{\partial q_i}(\bm{\hat{q}}_{F}^{n,m}(t),\bm{\hat{q}}_{M}^{n,m}(t)) + \frac{\partial V_M}{\partial q_i}(\bm{\hat{q}}_{M}^{n,m}(t),\bm{\hat{q}}_{S}^n(t))\right) dt},\label{eq:mixed_jump}
\end{equation}
where the free-flight trajectories of the slow particles are computed over the coarse time interval as follows:
\begin{equation} \label{eq:free_flight_asyn_S}
\hat{q}_{j}^{n}(t) = q_j^{n}
+\frac{1}{m_{j}}p_{j}^{n+1/2}(t-t^{n}),\quad \forall t\in[t^{n},t^{n+1}],
\; \forall j\in S.
\end{equation}
\item For the slow particles ($i\in S$), one computes
\begin{subequations}
\begin{align}
q_i^{n+1} & = q_i^n + h_S\frac{1}{m_i}p_i^{n+1/2},\\
p_i^{n+3/2} &= p_i^{n-1/2}
-2\sum_{m=0}^{K-1}\int_{t^{n,m}}^{t^{n,m+1}}\frac{\partial V_M}{\partial
q_i}(\bm{\hat{q}}_{M}^{n,m}(t),\bm{\hat{q}}_{S}^n(t))dt -2\int_{t^n}^{t^{n+1}} \frac{\partial V_S}{\partial q_i}(\bm{\hat{q}}_{S}^n(t)) dt,
\end{align}
\label{eq:slow}
\end{subequations}
with the free-flight trajectories defined above.
\end{itemize}

Note that the slow forces between slow and
mixed particles need to be evaluated at every fine
time-step. In the
worst case scenario, every slow force links a slow particle with a
mixed particle, which results in the asynchronous scheme reverting to
the synchronous scheme. Such a case typically occurs when the particles
all interact or when the system alternates fast and slow forces. On
the other hand, the efficiency of the asynchronous scheme compared to
the synchronous scheme is maximal in the case where the mixed particles
constitute a small fraction of the particles and their interaction
is limited to a small fraction of the slow particles. A typical case is a
nearest-neighbour interaction with slow and fast particles located in
distinct regions, the mixed particles being confined in a lower
dimensional delimiting interface. In the limit of a large number of
particles, the computational cost per large time-step $h_S$
reduces to $K$ integrals of the fast forces and one integral of the
slow forces.

\begin{proposition} [Synchronization of particles]
\label{resynchro energy}
Assume that the numerical integration is exact. Then the numerical scheme
(\ref{eq:fast})--(\ref{eq:slow}) exactly conserves the following pseudo-energy at the coarse time nodes $t^n$:
\begin{align}
\tilde{H}^{n} = {}&V_S(\bm{q}_S^n) + V_M(\bm{q}_M^{n,0},\bm{q}_S^{n}) + V_F(\bm{q}_F^{n,0},\bm{q}_M^{n,0})
 \nonumber\\
&+ \sum_{i\in S} \frac{1}{2m_i} \left(p_{i}^{n-1/2}\right)^{\mathrm{T}} p_i^{n+1/2}+ \sum_{i\in F\cup M} \frac{1}{2m_i} \left(p_i^{n,-1/2}\right)^{\mathrm{T}} p_i^{n,1/2}. \label{discrete energy asynchronous}
\end{align}
\end{proposition}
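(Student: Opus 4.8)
The plan is to mimic the proof of Theorem~\ref{energy conservation}. Fix a coarse interval $I_n=[t^n,t^{n+1}]$; the goal is to show that the quantity in \eqref{discrete energy asynchronous} satisfies $\tilde H^{n+1}=\tilde H^n$. As in the synchronous case, the main tool is the chain rule applied to the potential along the (piecewise) free-flight trajectories, combined with the momentum-jump relations \eqref{eq:fast_jump}, \eqref{eq:mixed_jump} and the momentum update in \eqref{eq:slow}.

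First I would compute the variation over $I_n$ of each of the three pieces $V_F$, $V_M$, $V_S$ of the potential. For $V_F(\bm q_F,\bm q_M)$, on each fine subinterval $[t^{n,m},t^{n,m+1}]$ the fast and mixed particles move at the constant velocity $\frac1{m_j}p_j^{n,m+1/2}$, so the chain rule and integration give $V_F(\bm q_F^{n,m+1},\bm q_M^{n,m+1})-V_F(\bm q_F^{n,m},\bm q_M^{n,m})=\sum_{i\in F\cup M}\frac1{m_i}\bigl(p_i^{n,m+1/2}\bigr)^{\mathrm T}\int_{t^{n,m}}^{t^{n,m+1}}\tfrac{\partial V_F}{\partial q_i}\,dt$; summing over $m\in\{0,\dots,K-1\}$ and using $\bm q^{n,K}=\bm q^{n+1,0}$ yields the total variation of $V_F$ over $I_n$. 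For $V_M(\bm q_M,\bm q_S)$, the mixed arguments still follow the fine free flight whereas the slow arguments follow the coarse free flight \eqref{eq:free_flight_asyn_S}, which is affine on $I_n$ and hence restricts to the correct affine map on each $[t^{n,m},t^{n,m+1}]$; differentiating, integrating and summing over $m$ produces a ``mixed part'' carrying the fine momenta $p_i^{n,m+1/2}$ ($i\in M$) and a ``slow part'' carrying the coarse momentum $p_i^{n+1/2}$ ($i\in S$), where in the latter $\sum_m\int_{t^{n,m}}^{t^{n,m+1}}=\int_{t^n}^{t^{n+1}}$. Finally, $V_S(\bm q_S)$ varies along the single coarse free flight, giving $V_S(\bm q_S^{n+1})-V_S(\bm q_S^n)=\sum_{i\in S}\frac1{m_i}\bigl(p_i^{n+1/2}\bigr)^{\mathrm T}\int_{t^n}^{t^{n+1}}\tfrac{\partial V_S}{\partial q_i}\,dt$.

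Next I would substitute the jump equations. The terms with $i\in F$ coming from $V_F$ are rewritten directly with \eqref{eq:fast_jump}. The terms with $i\in M$ coming from $V_F$ and from $V_M$ combine into $\sum_m\sum_{i\in M}\frac1{m_i}\bigl(p_i^{n,m+1/2}\bigr)^{\mathrm T}\int_{t^{n,m}}^{t^{n,m+1}}\bigl(\tfrac{\partial V_F}{\partial q_i}+\tfrac{\partial V_M}{\partial q_i}\bigr)dt$, which equals $-\tfrac12\bigl(p_i^{n,m+3/2}-p_i^{n,m-1/2}\bigr)$ inside the $i$-sum by \eqref{eq:mixed_jump}; and the ``slow part'' of the $V_M$ variation together with the $V_S$ variation combine into $\sum_{i\in S}\frac1{m_i}\bigl(p_i^{n+1/2}\bigr)^{\mathrm T}\int_{t^n}^{t^{n+1}}\bigl(\tfrac{\partial V_M}{\partial q_i}+\tfrac{\partial V_S}{\partial q_i}\bigr)dt$, which equals $-\tfrac12\bigl(p_i^{n+3/2}-p_i^{n-1/2}\bigr)$ inside the $i$-sum by the momentum update in \eqref{eq:slow}. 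Hence the total variation of the potential over $I_n$ is $-\tfrac12\sum_{m=0}^{K-1}\sum_{i\in F\cup M}\frac1{m_i}\bigl(p_i^{n,m+1/2}\bigr)^{\mathrm T}\bigl(p_i^{n,m+3/2}-p_i^{n,m-1/2}\bigr)-\tfrac12\sum_{i\in S}\frac1{m_i}\bigl(p_i^{n+1/2}\bigr)^{\mathrm T}\bigl(p_i^{n+3/2}-p_i^{n-1/2}\bigr)$. Since each summand splits as $(p_i^{n,m+1/2})^{\mathrm T}p_i^{n,m+3/2}-(p_i^{n,m-1/2})^{\mathrm T}p_i^{n,m+1/2}$ (scalars), the fine sum telescopes over $m$, and the slow term is already of this telescoping form; using the natural identifications $p_i^{n,K-1/2}=p_i^{n+1,-1/2}$, $p_i^{n,K+1/2}=p_i^{n+1,1/2}$ and $p_i^{n,-1/2}=p_i^{n-1,K-1/2}$, one checks that this variation of the potential is exactly the opposite of the variation, between $t^n$ and $t^{n+1}$, of the kinetic-like terms $\sum_{i\in S}\frac1{2m_i}(p_i^{n-1/2})^{\mathrm T}p_i^{n+1/2}+\sum_{i\in F\cup M}\frac1{2m_i}(p_i^{n,-1/2})^{\mathrm T}p_i^{n,1/2}$ in \eqref{discrete energy asynchronous}. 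Therefore $\tilde H^{n+1}=\tilde H^n$, which is the claim.

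The main obstacle is the bookkeeping rather than any genuine difficulty: one must keep track that $V_M$ couples two particle families evolving on different time grids, verify that the coarse free flight \eqref{eq:free_flight_asyn_S} restricts correctly on each fine subinterval, match every force integral produced by the chain rule with exactly one jump relation (paying attention to the factor $2$ in \eqref{eq:fast_jump}, \eqref{eq:mixed_jump}, \eqref{eq:slow}), and confirm the endpoint identifications of the fast and mixed momenta across consecutive coarse steps needed for the telescoping. The argument then closes just as in Theorem~\ref{energy conservation}.
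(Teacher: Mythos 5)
Your proof is correct and follows essentially the same route as the paper's: chain rule along the free-flight trajectories (using that the coarse free flight of the slow particles restricts to an affine map on each fine subinterval), substitution of the jump relations \eqref{eq:fast_jump}, \eqref{eq:mixed_jump}, \eqref{eq:slow}, and cancellation of the $\partial V_M/\partial \bm{q}_S$ cross terms. The only difference is organizational: the paper splits $\tilde H^n$ into a slow part $\tilde H_S^n$ and a fast--mixed part $\tilde H_{FM}^{n,m}$ and shows that their residual cross terms cancel after summing over the fine steps, whereas you compute the total potential variation at once and telescope the kinetic-like terms; the underlying computation is identical.
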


\begin{proof}
Let us set
\begin{align*}
\tilde{H}_S^n =&  V_S(\bm{q}_S^n) + \sum_{i\in S} \frac{1}{2m_i} \left(p_{i}^{n-1/2}\right)^{\mathrm{T}} p_i^{n+1/2}, \\
\tilde{H}_{FM}^{n,m} =& V_F(\bm{q}_F^{n,m},\bm{q}_M^{n,m}) +
V_M(\bm{q}_M^{n,m},\bm{\hat{q}}_S^n(t^{n,m})) + \sum_{i\in F\cup M} \frac{1}{2m_i} \left(p_i^{n,m-1/2}\right)^{\mathrm{T}} p_i^{n,m+1/2},
\end{align*}
for all $m\in \{0,\ldots,K\}$,
so that $\tilde{H}^n = \tilde{H}_S^n+\tilde{H}_{FM}^{n,0}$.
Following the same calculations as in the proof of Theorem \ref{energy
conservation} for equation (\ref{eq:slow}), we infer that
\begin{equation*} 
\tilde{H}_S^{n+1} = \tilde{H}_S^n
- \sum_{m=0}^{K-1}\int_{t^{n,m}}^{t^{n,m+1}}{\frac{\partial
V_M}{\partial \bm{q}_S}(\bm{\hat{q}}_M^{n,m}(t),\bm{\hat{q}}_{S}^n(t))\cdot\left(\bm{M}_{S}^{-1}\bm{p}_{S}^{n+1/2}\right)dt}.
\end{equation*}
Similarly, for all $m\in\{0,\dots,K-1\}$, using (\ref{eq:fast}) and (\ref{eq:mixed_jump}), we have
\begin{equation*}
\tilde{H}_{FM}^{n,m+1} = \tilde{H}_{FM}^{n,m}
+ \int_{t^{n,m}}^{t^{n,m+1}}{\frac{\partial V_M}{\partial \bm{q}_S}(\bm{\hat{q}}_M^{n,m}(t),\bm{\hat{q}}_{S}^n(t))\cdot\left(\bm{M}_{S}^{-1}\bm{p}_{S}^{n+1/2}\right)dt},
\end{equation*}
and summing over $m$, we obtain
\begin{equation*}
\tilde{H}_{FM}^{n+1,0} = \tilde{H}_{FM}^{n,K} = \tilde{H}_F^{n,0}
+ \sum_{m=0}^{K-1}{\int_{t^{n,m}}^{t^{n,m+1}}{\frac{\partial V_M}{\partial\bm{q}_S}(\bm{\hat{q}}_M^{n,m}(t),\bm{\hat{q}}_{S}^n(t))\cdot\left(\bm{M}_{S}^{-1}\bm{p}_{S}^{n+1/2}\right)dt}},
\end{equation*}
which gives the result.
\end{proof}

\begin{remarque} [Asynchronous pseudo-energy conservation]
The pseudo-energy $\tilde{H}^n$ of Theorem \ref{energy conservation} is not
conserved after every integration over a fast time-step $h_F$ in the
asynchronous setting. This results from the fact that during a "slow" time-step $h_S$, the effect of forces has been taken into account for the "fast" particles but not for the "slow" particles.
\end{remarque}

\subsection{Numerical results}

In this section, we present numerical results on the asynchronous scheme. We first consider a variant of the Fermi--Pasta--Ulam system with a slow-fast dynamics and then an inhomogeneous wave propagation problem.

\subsubsection{Fermi--Pasta--Ulam system with slow-fast dynamics}\label{sec:slow_fast}

We propose a slight variation of the Fermi--Pasta--Ulam test case in
order to assess the efficiency of the asynchronous
scheme. Contrary to the usual setting where stiff and soft
springs alternate, we suppose here that the system is composed of one
stiff region and one soft region, delimited by an interface in the
middle of the domain. Figure \ref{schema_slow_fast} illustrates the setting. There are $(m-1)$ fast particles, $1$ mixed particle, and $m$ slow particles.
\begin{figure}[!htp]
\begin{center}
\begin{tikzpicture}
\draw[fill=black!10!white, draw=none] (-0.7,-1.5) rectangle (0.,1.5);
\draw[pattern=north east lines, pattern color=black, draw=none] (-0.7,-1.5) rectangle (0.,1.5);
\draw[thick] (0,-1.5) -- (0,1.5);
\def\coilfineup#1{
        {#1 +  (1.5-0.4)/7*\t - 0.2*(cos(\t * pi r)-1)},
        {0.4 * sin(\t * pi r)}
        }
\def\coilfinedown#1{
        {#1 +  (1.5-0.4)/7*\t - 0.2*(cos(\t * pi r)-1)},
        {-0.4 * sin(\t * pi r)}
        }
\def\coilthickup#1{
        {#1 +  (1.5-0.2)/15*\t - 0.1*(cos(\t * pi r)-1)},
        {0.4 * sin(\t * pi r)}
        }
\def\coilthickdown#1{
        {#1 +  (1.5-0.2)/15*\t - 0.1*(cos(\t * pi r)-1)},
        {-0.4 * sin(\t * pi r)}
        }    
\draw[domain={0:15},smooth,variable=\t,samples=100] plot (\coilthickup{0});
\draw[domain={0:15},smooth,variable=\t,samples=100] plot (\coilthickdown{1.5});
\draw[domain={0:15},smooth,variable=\t,samples=100] plot (\coilthickup{3});
\draw[domain={0:7},smooth,variable=\t,samples=100, thick] plot (\coilfinedown{4.5});
\draw[domain={0:7},smooth,variable=\t,samples=100, thick] plot (\coilfineup{6});
\draw[domain={0:7},smooth,variable=\t,samples=100, thick] plot (\coilfinedown{7.5});
\draw[domain={0:7},smooth,variable=\t,samples=100, thick] plot (\coilfineup{9});
\draw[fill=black!10!white, draw=none] (10.5,-1.5) rectangle (11.2,1.5);
\draw[pattern=north east lines, pattern color=black, draw=none] (10.5,-1.5) rectangle (11.2,1.5);
\draw[thick] (10.5,-1.5) -- (10.5,1.5);
\draw[fill] (0,0) circle [radius=3pt];
\draw[fill] (1.5,0) circle [radius=3pt];
\draw[fill] (3,0) circle [radius=3pt];
\draw[fill] (4.5,0) circle [radius=3pt];
\draw[fill] (6,0) circle [radius=3pt];
\draw[fill] (7.5,0) circle [radius=3pt];
\draw[fill] (9,0) circle [radius=3pt];
\draw[fill] (10.5,0) circle [radius=3pt];
\draw (1.5,0.4) node[anchor=south]{$q_1$};
\draw (3,0.4) node[anchor=south]{$q_2$};
\draw (4.5,0.4) node[anchor=south]{$q_m$};
\draw (5.25,0.4) node[anchor=south]{$\cdots$};
\draw (7.5,0.4) node[anchor=south]{$q_{2m-1}$};
\draw (9,0.4) node[anchor=south]{$q_{2m}$};
\draw (2.25,-0.4) node[anchor=north]{\begin{minipage}{2cm}\centering
        stiff harmonic\end{minipage}};
\draw (6.75,-0.4) node[anchor=north]{\begin{minipage}{2cm}\centering
        soft nonlinear\end{minipage}};
\end{tikzpicture}
\end{center}
\caption{Setting for the Fermi--Pasta--Ulam system with slow-fast dynamics}
\label{schema_slow_fast}
\end{figure}

We consider a problem in dimension $d=1$. The Hamiltonian is given by
\[H(\bm{p},\bm{q}) = \frac{1}{2} \sum_{i=1}^{2m}{p^2_{i}}
+ \frac{\omega^2}{4} \sum_{i=1}^m (q_{i} - q_{i-1})^2
+ \sum_{i=m}^{2m}{(q_{i+1} - q_{i})^4}. \]
In the present experiment,
we take $m=3$ and $\omega^2 = 10$. The fast forces being generated by stiff linear springs, the fast time-step $h_F$ should respect the CFL condition from Equation (\ref{eq:abs_stab}), which here leads to $h_F <  3\cdot 10^{-3}$. 
The small time-step $h_S$ being controlled by soft nonlinear springs, the CFL condition (\ref{eq:abs_stab}) is not applicable. A constant stable time-step has been found empirically to be $h_S\leq 10^{-1}$ using the five-point Gauss--Lobatto quadrature of order 7.
The dynamics
of the particles is presented in Figure \ref{fig:Position_slow_fast}
for $h_S = 0.01$ and $h_F=2\cdot 10^{-4}$, so that 50 iterations
of the fine time-step are carried out for each iteration of the coarse
time-step. Observe that, as expected, the fast particles ($1\leq i\leq
m$) exhibit oscillations with a typical frequency $\omega$, whereas the
slow particles ($m+1\leq i\leq 2m$) have tame nonlinear oscillations
with a frequency smaller than 1. Figure \ref{fig:Energy_slow_fast}
shows that the conservation of the discrete pseudo-energy  $\tilde H^n$ defined by (\ref{discrete energy}) is as perfect for the asynchronous
scheme as for the synchronous scheme with the five-point Gauss--Lobatto
quadrature of order 7.

\begin{figure}[!htp]
\begin{center}
\input{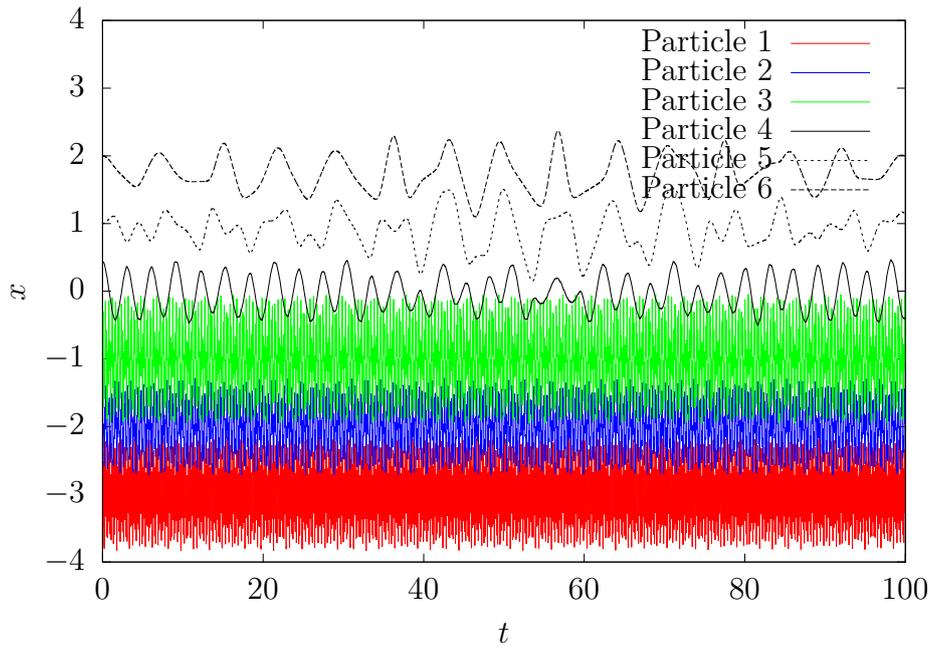}
\caption{Fermi--Pasta--Ulam system with slow-fast dynamics: Position dynamics for the asynchronous scheme ($h_S = 0.01$, $h_F=2\cdot 10^{-4}$)}
\label{fig:Position_slow_fast}
\end{center}
\end{figure}

\begin{figure}[!htp]
\begin{center}
\input{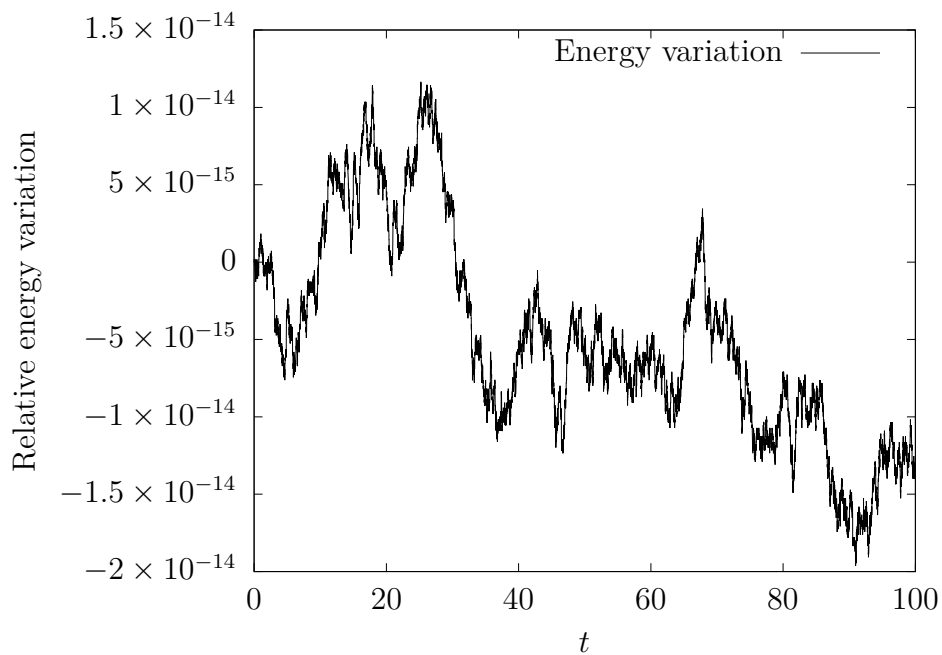}
\caption{Fermi--Pasta--Ulam system with slow-fast dynamics: Relative variation of the discrete pseudo-energy $\tilde{H}^n$ for the asynchronous scheme ($h_S = 0.01$, $h_F=2\cdot 10^{-4}$)}
\label{fig:Energy_slow_fast}
\end{center}
\end{figure}

The computational cost of the scheme is proportional to the number
$\mathcal{N}$ of force evaluations. With an $n$-point 
quadrature and a total integration time $T$, the numbers of force
evaluations $\mathcal{N}_s$ and $\mathcal{N}_a$ for the synchronous
and asynchronous schemes respectively on the present slow-fast
problem are given by:
\[\mathcal{N}_s = (n-1)T\frac{2m+1}{h_F},\qquad \mathcal{N}_a =
(n-1)T\left(\frac{m+1}{h_F}+\frac{m}{h_S}\right).\]
Recalling that $K=\frac{h_S}{h_{F}}\geq 1$ is the number of fast steps per slow
step, the cost reduction $\eta$ of the asynchronous scheme with
respect to the synchronous scheme is given by
\[
\eta = \frac{\mathcal{N}_a}{\mathcal{N}_s} = \frac{1+\frac{m}{(m+1)K}}{1+\frac{m}{m+1}}.
\]
For $h_S=0.01$, $h_F=2\cdot 10^{-4}$ and $T = 100$, $\mathcal{N}_a=1.015\cdot
10^8$, to be compared with $\mathcal{N}_s=1.75\cdot 10^8$. As $m$
increases,
\[
\eta \xrightarrow{m\to+\infty} \frac{1+\frac{1}{K}}{2}.
\]
When the number of fast subiterations $K$ increases, $\eta$ tends to
$0.5$, which means that the computational cost reduction of the
asynchronous scheme compared to the synchronous scheme approaches
$50\%$. This is the best-case scenario, since the computational cost
is concentrated on the fast dynamics where frequent evaluations are
required, whereas the slow dynamics is almost costless.

In order to assess the accuracy of the asynchronous scheme, we
consider the $L^\infty$-error of the position of the asynchronous
solution with respect to the synchronous solution using the small
time-step $h_F$. Figure \ref{fig:conv_dt_lent} (left panel) 
shows the evolution of
the error as the coarse time-step $h_S$ is refined, with fixed
fine time-step $h_F=10^{-4}$. We observe a second-order convergence of the
error. Figure \ref{fig:conv_dt_fast} (right panel) displays the evolution of the
error as the fine time-step $h_F$ is further refined, 
with fixed coarse time-step
$h_S=10^{-2}$. We observe that the error decreases until it reaches a
plateau, which is due to the error on the slow particles. These
observations confirm that reducing the fine time-step beyond $h_F =
h_S/50$ does not significantly improve the error since the error is
dominated by the error on the slow particles. Conversely, the error
reduction due to the coarse time-step reduction is not compromised by
the asynchronous scheme.

\begin{figure}[!htp]
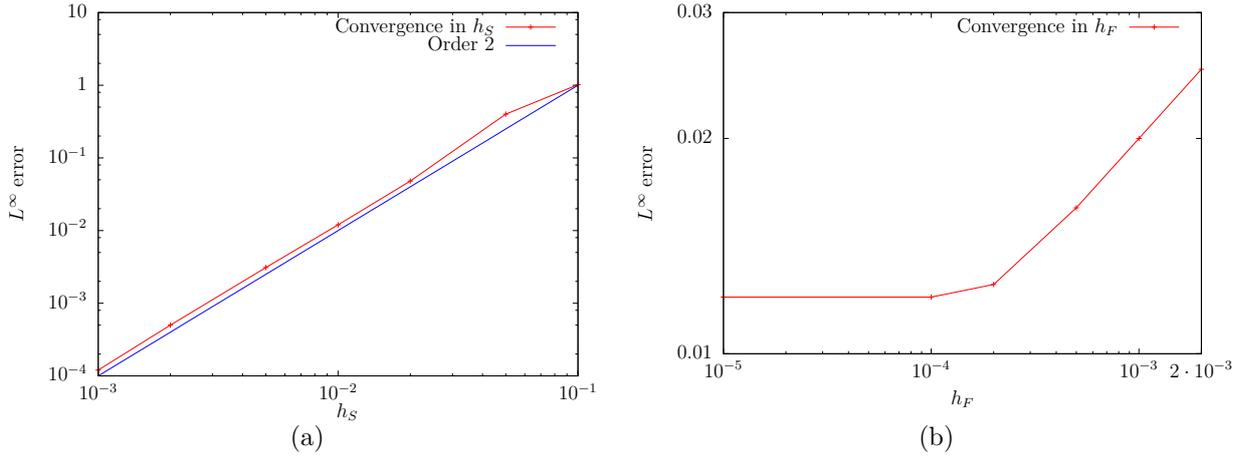

\begin{center}
\subfloat[]{\label{fig:conv_dt_lent}
\resizebox{0.5\textwidth}{!}{\input{conv_dt_lent}}
}
\subfloat[]{\label{fig:conv_dt_fast}
\resizebox{0.5\textwidth}{!}{\input{conv_dt_fast}}
}
\caption{Fermi--Pasta--Ulam system with slow-fast dynamics:
Convergence of the asynchronous scheme \protect\subref{fig:conv_dt_lent} with
respect to the coarse time-step $h_S$, with fixed fine time-step
$h_F=10^{-4}$, and \protect\subref{fig:conv_dt_fast} with
respect to the fine time-step $h_F$, with fixed coarse time-step $h_S=10^{-2}$} 
\end{center}
\end{figure}

Finally, a convergence test is carried out with a constant ratio $\frac{h_S}{h_F} = 25$ and using the five-point Gauss--Lobatto quadrature of order 7. The error is measured as previously by the $L^{\infty}$-error on the positions between the synchronous and asynchronous schemes. The results are presented in Figure \ref{convergence asynchrone}. The synchronous method is used with a constant time-step $h_F$. We observe second-order convergence as both time-steps are refined simultaneously.

\begin{figure}[!htp]
\begin{center}
\includegraphics[scale=0.7]{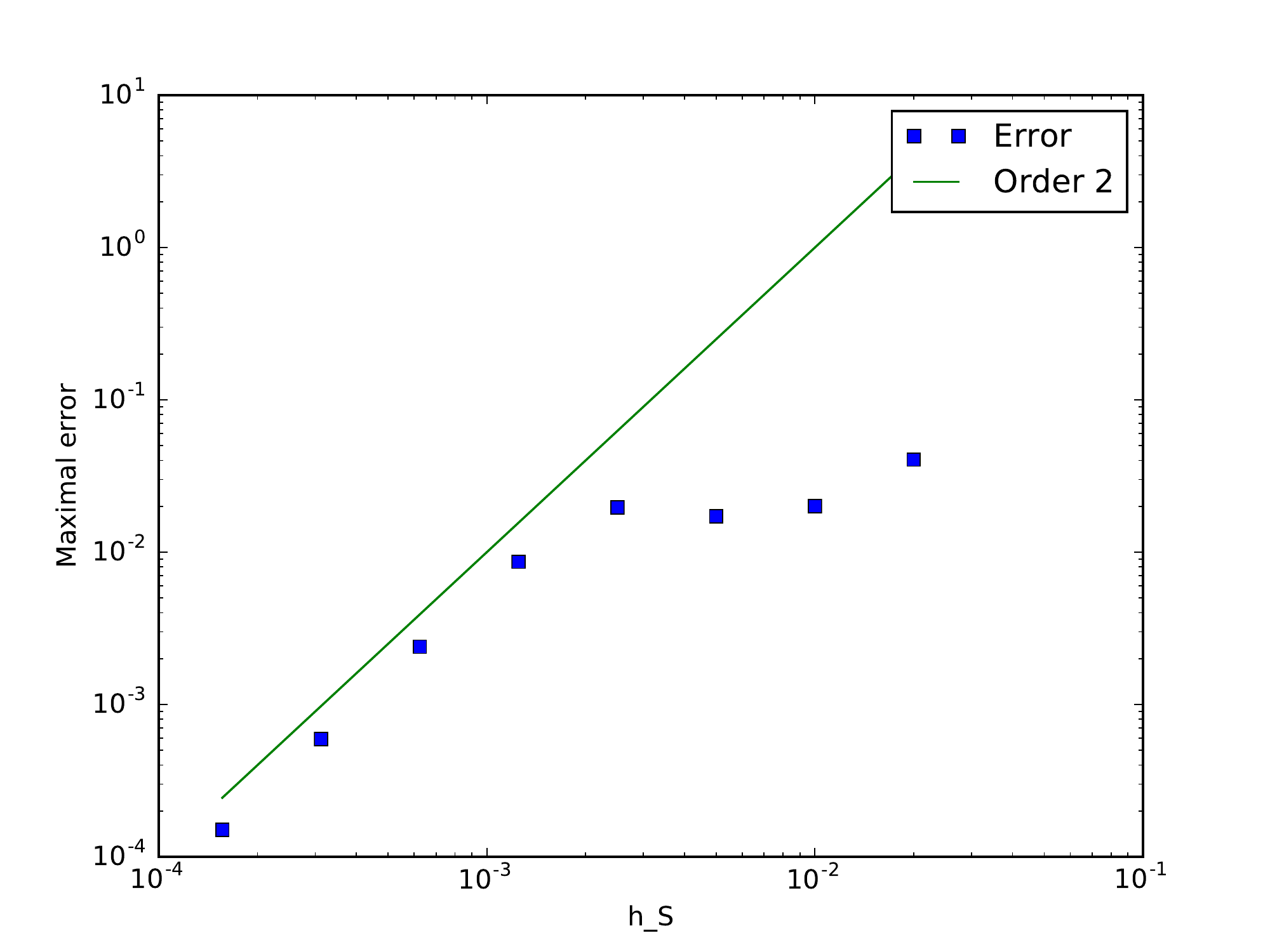}
\caption{Fermi--Pasta--Ulam system with slow-fast dynamics: $L^{\infty}$-error on the positions between the synchronous scheme (with time-step $h_F$) and the asynchronous scheme (with time-steps $h_S$ and $h_F$ having fixed ratio $\frac{h_S}{h_F} = 25$).}
\label{convergence asynchrone}
\end{center}
\end{figure}

\subsubsection{Inhomogeneous wave propagation}

As a physically relevant example of the slow-fast test case, we
consider the propagation of a wave in a linear elastic material in
dimension $d=1$, with an inhomogeneous speed of sound. Denote the
domain $\Omega$, $u^0:\Omega\to\R$ and $v^0:\Omega\to\R$ initial
conditions for displacement and velocity respectively, and
$u: \Omega \times \R^+ \to\R$ the displacement, $u$ follows the equations:
\begin{equation}\label{eq:inhom_wave}
\left\{
\begin{aligned}
&\partial_{tt}u = \partial_x \Big( c(x)^2 \partial_{x}u\Big) \quad  \text{ in } \Omega,\\
&u_{|\partial\Omega} = 0, \\ 
&u(x,0) = u^0(x), \quad \partial_tu(x,0) = v^0(x).
\end{aligned}
\right.
\end{equation}
We take $\Omega=(0,1)$ and we set
\[
c(x) = \begin{cases}
        10 &\text{ if }x\leq 0.5,\\
        1 &\text{ if } x>0.5.
     \end{cases}
\]
Setting $N\in\mathbb{N}^*$, $\Delta x = \frac{1}{N}$ and $x_i=i\Delta x$ for all
$i\in\{0,\dots,N\}$, the partial differential equation
(\ref{eq:inhom_wave}) can be semi-discretized in space with the following centered finite
difference scheme (which is equivalent to a discretization using $H^1$-conforming $\mathbb{P}_1$ Lagrange finite elements after lumping the mass matrix):
\begin{equation}\label{eq:discrete_wave}
\left\{
\begin{aligned}
&\frac{d^2u_i}{dt^2} = \frac{1}{\Delta x^{2}} \left(c(x_{i-1/2})^2(u_{i-1}-u_i)-c(x_{i+1/2})^2(u_i-u_{i+1})\right) \quad \text{ for }
i\in\{1,\dots,N-1\},\\
&u_0 = u_N = 0,\quad \frac{du_0}{dt} = \frac{du_N}{dt} = 0,\\
&u_i(0) = u^0(x_i), \quad \frac{du_i}{dt}(0) = v^0(x_i).
\end{aligned}
\right.
\end{equation}
Setting $\bm{q} = (u_i)_{0\leq i\leq N}$, $\bm{p}
= \left(\frac{du_i}{dt}\right)_{0\leq i\leq N}$ and  $\omega_{i-1/2}
= \frac{c(x_{i-1/2})}{\Delta x}$, the ordinary differential equation
in (\ref{eq:discrete_wave}) is derived from the following Hamiltonian:
\[
H(\bm{p},\bm{q}) = \frac{1}{2} \sum_{i=1}^{N-1}{p^2_{i}}
+ \frac{1}{2}\sum_{i=1}^{N}{\omega_{i-1/2}^2 (q_{i} - q_{i-1})^2}.
\]
The CFL condition (\ref{eq:abs_stab}) becomes 
\[
h < 2\frac{\Delta x}{\omega_{i-1/2}}, \qquad \forall
i \in \{1,\dots,N\}.\] For the indices $i$ such that $x_i\leq 0.5$,
one must then take $h\leq 0.2\Delta x$, while for $x_i>0.5$, it
suffices that $h\leq 2\Delta x$. In what follows, we therefore set the
slow (resp. fast) particules as the elements $i$ such that $x_i>0.5$
(resp. $x_i< 0.5$) and define $h_S = \Delta x$ and $h_F = 0.1\Delta
x$. The mixed particle is the particle at the interface between
the fast and slow particles.

The numerical solution and the exact solution for the displacement and the
velocity computed with $\Delta x=5\times 10^{-4}$ are presented in
Figures \ref{fig:inhom_wave_disp} and \ref{fig:inhom_wave_vit} respectively.
The system is initialized with the functions
\[
u^0(x)= 10^{-2}e^{-(20(x-0.2))^2}\mathds{1}_{(0,0.5)}(x), \quad v^0(x)= 8(x-0.2)e^{-(20(x-0.2))^2}\mathds{1}_{(0,0.5)}(x).
\]
The initial condition propagates to the right with the speed of sound
$c_1=10$, until it reaches $x=0.5$. At the boundary between the slow
and fast domain, it is partly transmitted to the right with speed of
sound $c_2=1$ and partly reflected with speed $-c_1$. The
reflected wave reflects again on the left boundary $x=0$ of the
domain. Successive reflections and transmissions occur, which result
in the final state in Figures \ref{subfig:f} and \ref{subfig2:f}. The
exact solution can be expressed as follows for all $t>0$:
\[
\begin{aligned}
\forall x\in(0,0.5),\quad u(x,t) &= \sum_{k\geq
0}{\left(\frac{c_2-c_1}{c_1+c_2}\right)^k\left(
u_0(x+k-c_1t)-u_0(k-x-c_1t)\right)},\\
\forall x\in(0.5,1),\quad u(x,t) &= \frac{2c_1}{c_1+c_2}\sum_{k\geq
0}{\left(\frac{c_2-c_1}{c_1+c_2}\right)^k
u_0\left(\frac{c_1}{c_2}(x-0.5)+k+0.5-c_1 t\right)}.
\end{aligned}
\]

The numerical solution matches very well the exact solution. We can
observe slight overshoots near the extrema and at the tail of the
peaks, especially in the slow domain. This can be explained by the
fact that the space-discretization (\ref{eq:discrete_wave}) is slightly
dispersive so that steep variations tend to generate oscillations
(similar to the Gibbs phenomenon). 
Figure \ref{fig:error_Nevals}
presents the behavior of the error for the asynchronous and
synchronous schemes with respect to the number of force
evaluations. We consider the maximal error at the end of the simulation, i.e., at $t=0.5$. The results are obtained by letting $h_S,h_F\to 0$ while keeping the ratio $h_S/h_F$ fixed. The convergence is of order 1 with respect to the number of force evaluations. Since the number of force evaluations scales like $(\Delta x \cdot \Delta t)^{-1}$, this is compatible with a second-order convergence in space and in time after taking into account the CFL condition.
In conclusion, the asynchronous scheme displays similar errors to the
synchronous scheme, with roughly half of the number of force evaluations
involved as noted in Section \ref{sec:slow_fast}. This confirms the
efficiency of the asynchronous scheme.

\begin{figure}[!htp]
\begin{center}
\begin{minipage}{0.49\textwidth}
\subfloat[$t=0$]{
\resizebox{\textwidth}{!}{
\input{deplacement_100_0.tex}
}
\label{subfig:a}
}\\
\subfloat[$t=0.2$]{
\resizebox{\textwidth}{!}{
\input{deplacement_100_2.tex}
}
\label{subfig:c}
}\\
\subfloat[$t=0.4$]{
\resizebox{\textwidth}{!}{
\input{deplacement_100_4.tex}
}
\label{subfig:e}
}
\end{minipage}
\begin{minipage}{0.49\textwidth}
\subfloat[$t=0.1$]{
\resizebox{\textwidth}{!}{
\input{deplacement_100_1.tex}
}
\label{subfig:b}
}\\
\subfloat[$t=0.3$]{
\resizebox{\textwidth}{!}{
\input{deplacement_100_3.tex}
}
\label{subfig:d}
}\\
\subfloat[$t=0.5$]{
\resizebox{\textwidth}{!}{
\input{deplacement_100_5.tex}
}
\label{subfig:f}
}
\end{minipage}
\end{center}
\caption{Inhomogeneous wave propagation: Displacement $u$ for $\Delta x =
5\times 10^{-4}$ at
times \protect\subref{subfig:a} $t=0$, \protect\subref{subfig:b}
$t=0.1$, \protect\subref{subfig:c} $t=0.2$, \protect\subref{subfig:d}
$t=0.3$, \protect\subref{subfig:e} $t=0.4$, \protect\subref{subfig:f}
$t=0.5$}
\label{fig:inhom_wave_disp}
\end{figure}

\begin{figure}[!htp]
\begin{center}
\begin{minipage}{0.49\textwidth}
\subfloat[$t=0$]{
\resizebox{\textwidth}{!}{
\input{vitesse_100_0.tex}
}
\label{subfig2:a}
}\\
\subfloat[$t=0.2$]{
\resizebox{\textwidth}{!}{
\input{vitesse_100_2.tex}
}
\label{subfig2:c}
}\\
\subfloat[$t=0.4$]{
\resizebox{\textwidth}{!}{
\input{vitesse_100_4.tex}
}
\label{subfig2:e}
}
\end{minipage}
\begin{minipage}{0.49\textwidth}
\subfloat[$t=0.1$]{
\resizebox{\textwidth}{!}{
\input{vitesse_100_1.tex}
}
\label{subfig2:b}
}\\
\subfloat[$t=0.3$]{
\resizebox{\textwidth}{!}{
\input{vitesse_100_3.tex}
}
\label{subfig2:d}
}\\
\subfloat[$t=0.5$]{
\resizebox{\textwidth}{!}{
\input{vitesse_100_5.tex}
}
\label{subfig2:f}
}
\end{minipage}
\end{center}
\caption{Inhomogeneous wave propagation: Velocity $\frac{du}{dt}$ for $\Delta x =
5\times 10^{-4}$ at
times \protect\subref{subfig2:a} $t=0$, \protect\subref{subfig2:b}
$t=0.1$, \protect\subref{subfig2:c} $t=0.2$, \protect\subref{subfig2:d}
$t=0.3$, \protect\subref{subfig2:e} $t=0.4$, \protect\subref{subfig2:f}
$t=0.5$}
\label{fig:inhom_wave_vit}
\end{figure}

\begin{figure}[!htp]
\input{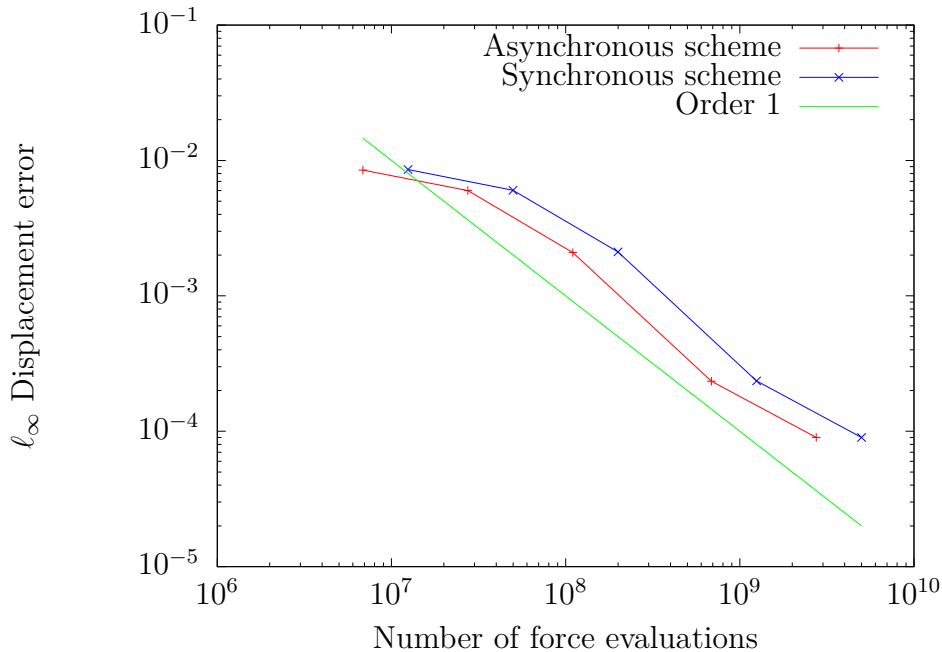}
\caption{Inhomogeneous wave propagation: Maximal displacement error at $t = 0.5$ against the number of
force evaluations for the asynchronous and the synchronous
schemes}\label{fig:error_Nevals}
\end{figure}

\section{Conclusion}

In this paper, a new explicit pseudo-energy conserving time-integration
scheme has been proposed. It is capable of handling general nonlinear
Hamiltonian systems and has been tested on classical numerical
benchmarks and on a nonlinear wave propagation problem. The present
scheme enables the use of local time-stepping
strategies to circumvent stiff CFL conditions on the time-step and
to enhance computational efficiency in the context of slow-fast dynamics.

Various perspectives of the present work can be considered. We believe
that the time-integration of dissipative systems should be a
straightforward extension of the present scheme. Variational
integrators have been proposed for dissipative systems and have proven
to be able to accurately track the physical dissipation of
energy~\cite{kane2000variational}. Other possible developments lie in
the adaptation of the scheme to constrained Hamiltonian
systems~\cite{leyendecker2008variational}, such as mechanical contact
problems~\cite{kane1999finite,wohlmuth2011variationally} and rigid
body
rotations~\cite{krysl2005explicit,salomon2008energy,mariotti2016new}. Another
perspective is the high-order extension of the present scheme.

\subsection*{Acknowledgements}
The authors would like to thank F. Legoll (University Paris-Est, Navier Laboratory) for stimulating discussions on the integration of
Hamiltonian dynamics.
The authors are also thankful to the anonymous referees for their insightful remarks.

\bibliographystyle{plain} 
\bibliography{bibliographie} 

\end{document}